\theoremstyle{plain}
\newtheorem{thm}{Theorem}[section]
\newtheorem{lemma}[thm]{Lemma}
\newtheorem{prop}[thm]{Proposition}
\newtheorem{corollary}[thm]{Corollary}
\newtheorem{introthm}{Theorem}
\newtheorem{introprop}[introthm]{Proposition}
\theoremstyle{definition}
\newtheorem{introdef}[introthm]{Definition}
\theoremstyle{definition}
\newtheorem{definition}[thm]{Definition}
\newtheorem{example}[thm]{Example}
\newtheorem{remark}[thm]{Remark}
\theoremstyle{remark}
\renewcommand{\tilde}{\widetilde}
\renewcommand{\bar}{\overline}
\newcommand{\bbC}{\mathbb{C}}
\newcommand{\bbP}{\mathbb{P}}
\newcommand{\bbZ}{\mathbb{Z}}
\newcommand{\bbQ}{\mathbb{Q}}
\newcommand{\bbR}{\mathbb{R}}
\newcommand{\bbG}{\mathbb{G}}
\newcommand{\ra}{\rightarrow}
\newcommand{\xra}{\xrightarrow}
\newcommand{\cu}{\subseteq}
\newcommand{\lra}{\longrightarrow}
\newcommand{\affPhi}{\widehat{\Phi}}
\newcommand{\affW}{\widehat{W}}
\newcommand{\extW}{\affW_{ext}}
\newcommand{\affX}{\widehat{X}}
\newcommand{\affT}{\widehat{T}}
\newcommand{\affDelta}{\widehat{\Delta}}
\newcommand{\sch}[1]{\bar{\Gr_{#1}}}
\newcommand{\calB}{\mathcal{B}}
\newcommand{\calC}{\mathcal{C}}
\newcommand{\calD}{\mathcal{D}}
\newcommand{\calF}{\mathcal{F}}
\newcommand{\calG}{\mathcal{G}}
\newcommand{\calO}{\mathcal{O}}
\newcommand{\frg}{\mathfrak{g}}
\newcommand{\frh}{\mathfrak{h}}
\newcommand{\frk}{\mathfrak{k}}
\newcommand{\frC}{\mathfrak{C}}
\newcommand{\barO}{\bar{\calO}}
\newcommand{\Gr}{\mathcal{G}r}
\newcommand{\Iw}{\mathcal{I}w}
\newcommand{\IC}{\mathrm{IC}}
\newcommand{\bfH}{\mathbf{H}}
\newcommand{\bfN}{\mathbf{N}}
\newcommand{\undH}{\underline{\mathbf{H}}}
\newcommand{\undQ}{\underline{\mathbb{Q}}}
\newcommand{\htil}{\tilde{h}}
\newcommand{\affHec}{\widetilde{\mathcal{H}}}
\newcommand{\aand}{\qquad\text{ and }\qquad}
\newcommand{\Address}{
	\bigskip{\footnotesize
		
		\textsc{Dipartimento di Matematica, Universit\`a di Pisa, Italy}\par\nopagebreak
		\textit{E-mail address}: \texttt{leonardo.patimo@unipi.it}
}}
\DeclareMathOperator{\End}{End}
\DeclareMathOperator{\grdim}{grdim}
\DeclareMathOperator{\ch}{ch}
\DeclareMathOperator{\Ima}{Im}
\DeclareMathOperator{\wt}{wt}
\DeclareMathOperator{\Perv}{Perv}
\DeclareMathOperator{\Sat}{Sat}
\DeclareMathOperator{\Rep}{Rep}
\DeclareMathOperator{\Conv}{Conv}
\DeclareMathOperator{\HL}{HL}
\DeclareMathOperator{\rk}{rk}
\title{Charges via the Affine Grassmannian}
\author{Leonardo Patimo}
\begin{document}

\maketitle

\begin{abstract}
We introduce a new approach to constructing charge statistics for $q$-weight multiplicities, which is based on the geometry of the affine Grassmannian of the Langlands dual group. 
We track hyperbolic localization variations under wall crossings in the cocharacter lattice and define a mimicking combinatorial procedure based on iterated swaps to construct the charge.

\end{abstract}

\section*{Introduction}

The dimensions of the weight spaces of irreducible representations of a reductive group $G$ have classical and well-known combinatorial interpretations: for example, for $G=SL_n(\bbC)$, the dimension of the $\mu$-weight space of the irreducible representation of highest weight $\lambda$ is the number of semi-standard Young tableaux $\calB(\lambda)_\mu$ of shape $\lambda$ and content $\mu$.

In 1983 Lusztig \cite{LusSingularities} defined $q$-analogues of 
the weight multiplicities for any reductive group $G$, also known as the Kostka--Foulkes polynomials $K_{\lambda,\mu}(q)$.  These polynomials record the dimensions of the graded components of the Brylinski filtration on weight spaces and can also be obtained as Kazhdan--Lusztig polynomials for the corresponding affine Weyl group. In particular, they have positive coefficients.
Finding a combinatorial interpretation for the coefficients of the Kostka--Foulkes polynomials is one of the most long-standing open problems in combinatorial representation theory.

In type A, Lascoux and Sch\"utzenberger gave a combinatorial formula for the Kostka--Foulkes polynomials by defining a statistic on the set of semi-standard tableaux, called the \emph{charge statistic} \cite{LSSur}. This is a function $c:\calB(\lambda)_\mu\ra \bbZ_{\geq 0}$ such that
\[ K_{\lambda,\mu}(q)=\sum_{T\in \calB(\lambda)_\mu} q^{c(T)}.  \]
There exist alternative constructions of the charge statistic.
In \cite{LLTCrystal}, Lascoux, Leclerc and Thibon gave a formula for the charge statistic in terms of the crystal graphs while Nakayashiki and Yamada \cite{NYKostka} defined the charge using the energy function on affine crystals. The proofs of all these formulas ultimately rely on tableaux combinatorics and are based on an operation called \emph{cyclage} of tableaux.

In this paper, we introduce a different approach to construct charge statistics following different and more geometric methods. Motivated by the geometric Satake equivalence, we relate the charge statistic on $\calB(\lambda)$ to the geometry of the affine Grassmannian $\Gr^\vee$ of the Langlands dual group $G^\vee$ of $G$. To our knowledge, this is the first time that the charge statistic is directly linked to the geometry of the affine Grassmannian. We regard this connection as an important tool to define the charge statistic in other types beyond type $A$.

One of the main ingredients in the geometric Satake equivalence is Braden's hyperbolic localization \cite{BraHyperbolic}. On $\Gr^\vee$ there is an action of the \emph{augmented torus} $\affT=T^\vee\times \bbC^*$, where $T^\vee\cu G^\vee$ is a maximal torus and $\bbC^*$ acts by loop rotations.
Every dominant cocharacter $\eta$ of $\affT$ determines a one-parameter subgroup $\bbC^*\cu \affT$ acting on the affine Grassmannian. For any weight $\mu$, we denote by $\HL^\eta_\mu$ the corresponding hyperbolic localization functor.

If $\eta$ is a dominant $T^\vee$-cocharacter (i.e., it is trivial in the $\bbC^*$-component), then the functors $\HL^\eta_\mu$ are exact on the category of perverse sheaves on $\Gr^\vee$ and they are known as the \emph{weight functors}.
 Let $\IC_\lambda$ denote the intersection cohomology sheaf of a Schubert variety inside $\Gr^\vee$. Then, the weight functors on $\IC_\lambda$ correspond via geometric Satake to the weight spaces of the irreducible representation $L(\lambda)$ of $G$. Remarkably, these weight functors come naturally endowed with a canonical basis parametrized by  \emph{MV cycles}, the irreducible components of the intersection of the $\eta$-attractive sets with the Schubert varieties. The existence of canonical bases parametrized by MV cycles can be interpreted as a geometric explanation for the existence of combinatorial formulas for the dimension of weight spaces for an arbitrary reductive group $G$.

If instead $\eta$ is  a dominant cocharacter of  $\affT$ (i.e., dominant with respect to the affine root system) the $\eta$-attractive sets are affine spaces, namely the Bruhat cells. In this case the functors $\HL^\eta_\mu$ return graded vector spaces whose graded dimensions are, up to a shift, the Kazhdan--Lusztig polynomials.

Thus, we have two regions of the cocharacter lattice $X_\bullet(\affT)$ where $\HL$ gives interesting outputs. If $\eta$ is a dominant $T^\vee$-cocharacter we say that $\eta$ is in the \emph{MV region} (the region where $\HL$ yields the MV cycles).  If $\eta$ is a dominant $\affT$-cocharacter we say that $\eta$ is in the \emph{KL region} (the region where graded dimension of  $\HL$ yields KL polynomials).

Our goal is to ``charge'' MV cycles so that they enumerate the KL polynomials. We achieve this by looking at a family of cocharacters  $\eta(t)$ going from the MV region to the KL region and recording how the hyperbolic localization changes along the way.

We find that the hyperbolic localization only changes when we cross a wall of the form 
\[H_{\alpha^\vee}=\{ \eta \in X_\bullet(\affT) \mid \langle \eta,\alpha^\vee\rangle = 0\}\]
where $\alpha^\vee$ is a positive real root of the affine root system of $G^\vee$.
If $\eta$ is a singular cocharacter lying on a wall $H_{\alpha^\vee}$, each connected component of the $\eta$-fixed points is isomorphic either to a point or to $\bbP^1(\bbC)$.
Understanding how $\HL$ changes between the two sides of the wall can be reduced to a problem on the projective line, which is easy to solve as there are only two simple equivariant perverse sheaves on $\bbP^1(\bbC)$. We denote by $<$ the Bruhat order on weights.

\begin{introprop}
Let $\eta_1$ and $\eta_2$ be two cocharacters which lie on opposite sides of a wall $H_{\alpha^\vee}$ with $\langle \eta_1,\alpha^\vee\rangle <0<\langle \eta_2,\alpha^\vee\rangle$. For any $\affT$-fixed point $\mu$ the graded dimensions of the $\HL$ functors vary as follows:
\[	\begin{array}{lr} \grdim\HL_{\mu}^{\eta_2}(\IC_\lambda) =\; q^{-2}\grdim\HL_{\mu}^{\eta_1}(\IC_\lambda) &\text{ if }s_{\alpha^\vee}(\mu)<\mu,\vspace{2mm}\\
\grdim\HL_{\mu}^{\eta_2}(\IC_\lambda) =\; \grdim\HL_{\mu}^{\eta_1}(\IC_\lambda)+ (1-q^{-2}) \cdot\grdim\HL_{s_{\alpha^\vee}(\mu)}^{\eta_1}(\calF) &\text{ if }s_{\alpha^\vee}(\mu)>\mu.\end{array}\]
\end{introprop}

The main focus of this paper is the definition of a combinatorial procedure that mimics the variation of hyperbolic localization functions along a family of cocharacters.

We can extend the definition of charge to any regular cocharacter. We say that $r(\eta,-)$ is a \emph{renormalized charge} (or \emph{re-charge}) for $\eta$ if
\[\grdim \HL^\eta_\mu(\IC_\lambda)=\sum_{T\in \calB(\lambda)_\mu} q^{2r(\eta,T)}.\]
It is straightforward to construct a recharge for $\eta_{MV}$ in the MV region, where hyperbolic localization is concentrated in a single degree. The matching recharge must be constant on each $\calB(\lambda)_\mu$ and it is defined by the formula
\begin{equation*}
	r(\eta_{MV},T)=-\langle \wt(T),\rho^\vee\rangle.
\end{equation*}

To obtain a recharge in the KL region we can  start with $r(\eta_{MV},-)$ in the MV region and modify it appropriately every time we cross a wall $H_{\alpha^\vee}$ in the cocharacter lattice. The changes that we need to employ can be efficiently encoded into a \emph{swapping function}.

\begin{introdef}
Let $\alpha^\vee$ be a positive real root of  the affine root system of $G^\vee$, let $\eta$ be a cocharacter and $r(\eta,-)$ a recharge for $\eta$. Then a \emph{swapping function} $\psi=\{\psi_\nu\}_{\nu\in X, \nu>s_{\alpha^\vee}(\nu)}$ for $r(\eta,-)$ and $\alpha$ is
  a collection of injective functions $\psi_{\nu}:\calB(\lambda)_{\nu} \ra \calB(\lambda)_{s_{\alpha^\vee}(\nu)}$ such that
\begin{equation}\label{psiintro}
	r(\eta,\psi_\nu(T))=r(\eta,T)-1 \qquad \text{ for every }T\in \calB(\lambda)_\nu. 
\end{equation}	
\end{introdef}

Let $\eta_1$ and $\eta_2$ be two cocharacters lying on opposite sides of a wall $H_{\alpha^\vee}$ with $\langle \eta_1,\alpha^\vee\rangle <0<\langle \eta_2,\alpha^\vee\rangle$. Suppose we have a recharge $r(\eta_1,-)$ for $\eta_1$. Then, there exists a swapping function $\psi$ for $r(\eta_1,-)$ and we can construct a recharge for $\eta_2$ on the other side of the wall $H_{\alpha^\vee}$ as follows.

\begin{introthm}
	Let $r(\eta_2,-)$ be the function on $\calB(\lambda)$ obtained by swapping  the values of $r(\eta_1,-)$ as indicated by $\psi$, i.e. we have
\begin{equation}
	r(\eta_2,T)=\begin{cases}r(\eta_1,T)-1& \text{if }s_{\alpha^\vee}(\wt(T))<\wt(T)\\
		r(\eta_1,T)+1& \text{if }\wt(T)<s_{\alpha^\vee}(\wt(T))\leq \lambda\text{ and }T\in \Ima(\psi)\\
		r(\eta_1,T)& \text{if }\wt(T)\leq s_{\alpha^\vee}(\wt(T))\text{ and }T\not\in \Ima(\psi).
	\end{cases}
\end{equation}
The $r(\eta_2,-)$ is a recharge for $\eta_2$.
\end{introthm}

For any sheaf $\IC_\lambda$ there are only finitely many walls that we need to cross to reach the KL region. Therefore, if we have at our disposal a swapping function $\psi$ as in \eqref{psiintro} for each of these walls, we can eventually produce a charge statistic.

\subsection*{Applications and future work}

Our approach decomposes the problem of finding a charge statistic into smaller subproblems: finding a swapping function for each of the walls that needs to be crossed.
This turns out to be more tractable  in several situations. 

In \cite{ChargeA}, we construct explicitly swapping functions in type $A$ using modified crystal operators, which are operators associated to every positive root obtained by twisting the usual crystal operators by the action of the Weyl group.
This provides a new independent construction for the charge in type $A$, which we also prove to be equivalent to the one originally described by Lascoux and Schützenberger and Lascoux--Leclerc--Thibon.
 
Moreover, in collaboration with J.~Torres \cite{PTAtoms}  we construct explicitly swapping functions in type $C_2$ with similar methods, using a variation of the modified crystal operators. Building on the results presented here, this leads to the construction of a charge in type $C_2$, a case for which a charge statistic was previously unknown.

Aside from type $A$ and $C_2$, the question of finding a combinatorial interpretation of the Kostka--Foulkes polynomials  is still open. As far as we know, only in type $C$ there is a general conjecture  \cite{LecKostka} which has been proven in some special cases \cite{LecCombinatorics,LLCombinatorics,DGTPositive}. Our geometric approach offers a promising  starting point to treat the problem of finding a charge statistic in a uniform way, since the framework developed here works for any reductive group $G$.

It would be interesting to determine an analogue of the swapping function at the level of MV cycles and/or MV polytopes. If this is achieved, then it is possible to produce distinguished bases of the intersection cohomology of the Schubert varieties of the affine Grassmannian. In this way one would obtain a result in a similar vein to what is done in \cite{PatBases} for the finite Grassmannian. 

Recall that $q$-weight multiplicities form a special class of Kazhdan--Lusztig polynomials. Then, in type $A$, it would also be intersesting to investigate the link between the combinatorial interpretations and the new and conjectural combinatorial formulas for Kazhdan--Lusztig polynomials given in \cite{PatCombinatorial} and \cite{BBD+Towards}.

%
%
%
\subsection*{Acknowledgments} 
 We wish to thank Geordie Williamson for pointing out that studying hyperbolic localization for a family of cocharacters could lead to understanding the charge statistic from a geometrical point of view.

\section{Affine Root Systems and Weyl Groups}\label{affineSection}

\subsection{Root Data and Reductive Groups}

We start by fixing a root datum $(\Phi,X,\Phi^\vee,X^\vee)$ where
\begin{itemize}
 \item $X$ is a finitely generated free abelian group, called the \emph{weight lattice}
 \item $\Phi \cu X$ is the finite subset of \emph{roots}
 \item $X^\vee$ denotes the dual of $X$, called the \emph{coweight lattice} 
 \item $\Phi^\vee \cu X^\vee$ is the dual root system, and its elements are called \emph{coroots}.
\end{itemize} 

We denote by $\langle-,-\rangle: X\times X^\vee \ra \bbZ$ the pairing.
We fix a system of positive roots $\Phi_+\cu \Phi$ and let $\Phi^\vee_+\cu \Phi^\vee$ be the corresponding system of positive coroots. 
We assume that our root system $\Phi$ is irreducible and semi-simple, i.e. that $\rk(\bbZ \Phi) = \rk(X)$. 

To the root datum we associate a split reductive algebraic group $G$ over $\bbQ$.\footnote{In the introduction, for simplicity, we considered a reductive group $G$ defined over $\bbC$. Here, and in the rest of the paper, we make a different choice because it is more general and it helps differentiate between the two sides of the Langlands duality.} We denote by $G^\vee$ the Langlands dual group of $G$ over $\bbC$, i.e. the complex algebraic group associated to the root datum $(\Phi^\vee,X^\vee,\Phi,X)$.
Let $T\cu B \cu G$ be a maximal torus and a Borel subgroup of $G$ such that $\Phi_+$ are the non-zero weights in $\mathrm{Lie}(B)$ for the action of $T$.  Similarly, let $T^\vee \cu B^\vee \cu G^\vee$ be a maximal torus and a Borel subgroup of $G^\vee$ such that $\Phi^\vee_+$ are the weights in $\mathrm{Lie}(B^\vee)$ for the action of $T^\vee$.

 The weight lattice $X$ can be naturally identified with the character group $X^\bullet(T)$ of $T$ and with the cocharacter group $X_\bullet(T^\vee)$ of $T^\vee$, so we can write
\[ X = X^\bullet(T)= X_\bullet(T^\vee).\]
Dually, we have $X^\vee=X^\bullet(T^\vee)=X_\bullet(T)$. 

Let $\Delta=\{ \alpha_1,\alpha_2,\ldots,\alpha_n\}\cu \Phi_+$ be the set of simple roots and  let $\Delta^\vee=\{\alpha_1^\vee,\ldots,\alpha_n^\vee\}\cu \Phi_+^\vee$ be the set of simple coroots.
We denote by $\varpi_1,\varpi_2,\ldots,\varpi_n\in X$ the fundamental weights and by $\varpi_1^\vee,\varpi_2^\vee,\ldots, \varpi_n^\vee\in X^\vee$ the fundamental coweights.
We define
\[ \rho:=\frac12 \sum_{\alpha\in \Phi_+}\alpha\aand\rho^\vee:=\frac12 \sum_{\alpha^\vee\in\Phi_+^\vee}\alpha^\vee. \]

Let $\frg^\vee$ be the Lie algebra of $G^\vee$. Let $\frh^\vee$ be the Cartan subalgebra of $\frg^\vee$ and let $\frh$ be its dual.
We have \[\frh\cong X^\vee\otimes_{\bbZ} \bbC\aand\frh^\vee\cong X\otimes_\bbZ \bbC.\]
We can extend $\langle-,-\rangle$ to a pairing $\frh^\vee\times \frh \ra \bbC$.

Let $\theta^\vee$ be the highest coroot in $\Phi^\vee$. We fix a Killing form $(-,-)$ on $\frh$ renormalized so that $(\theta^\vee,\theta^\vee)=2$. This induces an isomorphism $\sigma:\frh^\vee\xra{\sim} \frh$ defined by
\[\langle \lambda,v\rangle = (\sigma(\lambda),v)\quad\text{ for all }\lambda\in \frh^\vee, \; v\in \frh.\]
Via $\sigma$, we also induce a bilinear form on $\frh^\vee$, which we denote again by $(-,-)$. For every root $\alpha\in \Phi$ we denote by $\alpha^\vee$ the corresponding dual coroot in $\Phi^\vee$ which can be obtained as
\[\alpha^\vee=2\frac{\sigma(\alpha)}{(\alpha,\alpha)}.\]

\subsection{The Affine Root System}
We recall from \cite{KacInfinite} the notions of affine Lie algebras and of the related affine root systems.
We can associate to $\frg^\vee$ its \emph{affine Lie algebra}
\[L(\frg^\vee) = \frg^\vee[t,t^{-1}]\oplus \bbC c \oplus \bbC d\]
where $c$ denotes the \emph{canonical central element} and $d$ the \emph{scaling element}. 

We denote by $\affPhi^\vee_{tot}$ the \emph{affine root system} of $L(\frg^\vee)$. This is defined as the set of weights occurring in the adjoint representation of $L(\frg^\vee)$. We recall now how $\affPhi^\vee_{tot}$ can be directly constructed from $\Phi^\vee$.

Let $\frk^\vee=\frh^\vee \oplus \bbC d\cu L(\frg^\vee)$ and let $\frk$ be the dual vector space of $\frk^\vee$. We have $\frk=\frh \oplus \bbC \delta$ where
\[ \langle d,\frh\rangle =\langle \frh^\vee,\delta\rangle = 0\aand\langle d,\delta\rangle =1.\]
 We define the lattices $ \affX^\vee=X^\vee\oplus \bbZ \delta \subset \frk$ and $\affX = X \oplus \bbZ d\subset \frk^\vee$. We extend the Killing form from $\frh$ to a bilinear form on $\frk$ (always denoted by $(-,-)$) by setting
 \[ (\delta,v) =0\quad \text{ for any }v\in \frk.\]

The affine root system $\affPhi^\vee_{tot}$ can be realized inside $\affX^\vee$. There are two types of roots in $\affX^\vee$: real roots  
and imaginary roots. The imaginary roots and the real roots are respectively \[\affPhi^\vee_{im}=\{ k\delta \mid k \in \bbZ \setminus \{0\}\}\aand \affPhi_{re}^\vee= \{ \alpha^\vee + m\delta \mid \alpha^\vee \in \Phi^\vee, m \in \bbZ\}.\]
The positive real roots are \[\affPhi^\vee_{re,+} =\{ \alpha^\vee + m\delta \mid \alpha^\vee \in \Phi^\vee, m >0\} \cup \Phi^\vee_+\]
 and the simple roots are 
 \[\affDelta^\vee = \Delta^\vee \cup \{\alpha^\vee_0\}\]
where $\alpha^\vee_0:=\delta -\theta^\vee$. 
 Only real roots play a role in this paper, so we simply use the notation $\affPhi^\vee$ for $\affPhi^\vee_{re}$ and $\affPhi^\vee_+$ for $\affPhi^\vee_{re,+}$.

For any simple roots $\alpha_i^\vee\in \affDelta^\vee$ we define the corresponding simple reflection $s_i:\frk\ra \frk$ by
\[s_i(v) = v-2\frac{(v,\alpha_i^\vee) }{(\alpha_i^\vee,\alpha_i^\vee)}\alpha_i^\vee.\]

\begin{definition}
 We denote by $\affW$ the subgroup of $\End_{\bbC}(\frk)$ generated by $s_0,\ldots,s_n$. The group $\affW$ is called the \emph{affine Weyl group} of $\Phi^\vee$.
\end{definition}

The subgroup generated by $s_1,\ldots,s_n$ is isomorphic to the Weyl group $W$ of $G^\vee$ and of $G$. 
We have an isomorphism
\[\affW\cong W \ltimes \bbZ\Phi.\]
Each real root can be conjugated to a simple root via the affine Weyl group. The reflections in $\affW$ are in bijection with
$\affPhi^\vee_+$. For $\alpha^\vee\in \affPhi^\vee_+$ we denote by $s_{\alpha^\vee}$ the corresponding reflection, while for a reflection $t\in \affW$ we denote by $\alpha_t^\vee$ the corresponding positive real root.
\begin{remark}
It may appear at first counterintuitive to see that it is the root lattice $\bbZ\Phi$ (and not $\bbZ\Phi^\vee$!) that occurs as a subgroup of linear endomorphisms of $\frk$. To clarify this, we explain how one can identify $\lambda \in \bbZ \Phi$ with an element $\mathtt{t}_\lambda\in \affW\cu \End_{\bbC}(\frk)$.

Let $\theta\in \Phi$ be the dual root of $\theta^\vee$ (i.e.,  the highest short root in $\Phi_+$). Since $\theta$ is a short root, the following holds.
\begin{equation}\label{Wtheta}
\text{The orbit }W\cdot \theta\text{ generates the lattice }\bbZ \Phi. 
\end{equation}

For any $w\in W$, we have $w\mathtt{t}_{\lambda} w^{-1}=\mathtt{t}_{w(\lambda)}$. Because of \eqref{Wtheta}, it is enough to describe the element $\mathtt{t}_\theta\in \affW$. This is given by the formula
\begin{equation}\label{ttheta}\mathtt{t}_\theta=s_{\theta^\vee}s_0.\footnote{Observe that this is the opposite choice of \cite{KacInfinite}. The reason for this difference is that we want the simple reflections $s_0,\ldots,s_n$ to be the reflections along the walls of the fundamental alcove \[C^-=\{\lambda \in X_{\bbR} \mid -1 \leq \langle \lambda,\alpha^\vee\rangle \leq 0\text{ for all }\alpha\in \Phi_+ \}.\]}\end{equation}
A more explicit description of the action of $\bbZ \Phi$ on $\frk$ can be found in \cite[(6.5.2)]{KacInfinite}.
\end{remark}

\subsection{The Affine Action of \texorpdfstring{$\affW$}{the Affine Weyl Group}}\label{AffineAction}

The action of $\affW$ on $\frk$ described above occurs naturally as the action of the Weyl group of $L(\frg^\vee)$ on the dual of the Cartan subalgebra. 
There is another important action of $\affW$ that we need to consider at the same time: the action on $\frh^\vee$ by affine transformations.

Recall that we have $\affW=W\ltimes \bbZ\Phi$ and $\bbZ \Phi$ is a subgroup of $X\cu \frh^\vee$. 
The finite Weyl group $W$ acts on $\frh$, and acts as well on $\frh^\vee$ via the isomorphism $\sigma$. The root lattice $\bbZ \Phi$ acts on
$\frh^\vee$ by translations. We can combine these two actions to obtain a faithful action of $\affW$ on $\frh^\vee$, i.e., we realize $\affW$ as a subgroup of $\mathrm{Aff}_\bbC(\frh^\vee)$, the group of affine transformations of $\frh^\vee$. We will refer to this as the \emph{affine action} of $\affW$. Notice that the affine action restricts to an action of $\affW$ on $X$.

We need to make more explicit how the reflections in $\affW$ act on $\frh^\vee$. Recall by \eqref{ttheta} that for the simple reflection $s_0$ (which corresponds to the simple root $\alpha_0^\vee=\delta-\theta^\vee$) we have $s_0= s_{\theta^\vee} \mathtt{t}_\theta$, so $s_0$ acts on $\frh^\vee$ as the affine reflection
\[ s_0(\lambda) = \lambda - \left(2\frac{(\lambda, \theta)}{(\theta,\theta)} +1 \right)\theta = \lambda - \left(\langle \lambda,\theta^\vee\rangle +1 \right)\theta.\]
More generally, if $\alpha^\vee=m\delta- \beta^\vee\in \affPhi^\vee$ is a real root, with $m\in \bbZ$ and $\beta^\vee \in \Phi^\vee$, we have 
\begin{equation}\label{salpha0}
s_{\alpha^\vee}(\lambda) = \lambda - \left(2\frac{ (\lambda,\beta)}{(\beta,\beta)} + m	\right)\beta = \lambda - \left(\langle \lambda,\beta^\vee\rangle + m	\right)\beta.
\end{equation}

The affine action is not free. However, we have the following weaker statement regarding reflections. 
\begin{lemma}\label{notfixing}
	Let $r,t\in\affW$ be reflections and let $\mu \in \frh^\vee$. If $t\mu=r\mu \neq \mu$, then $r=t$.
\end{lemma}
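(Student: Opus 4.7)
The plan is to exploit the explicit description of the affine action of a reflection given by formula \eqref{salpha0} in order to recover, from the single vector $t\mu - \mu$, the underlying real root of $t$ up to sign, and then the constant term from a direct comparison.

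Concretely, write $t = s_{\alpha^\vee}$ and $r = s_{\gamma^\vee}$, where by fixing representatives I may assume $\alpha^\vee = m\delta - \beta^\vee$ and $\gamma^\vee = n\delta - \epsilon^\vee$ with $\beta^\vee,\epsilon^\vee \in \Phi^\vee$ and $m,n\in\bbZ$. Formula \eqref{salpha0} then gives
\[
t\mu - \mu = -\bigl(\langle \beta^\vee,\mu\rangle + m\bigr)\beta
\aand
r\mu - \mu = -\bigl(\langle \epsilon^\vee,\mu\rangle + n\bigr)\epsilon.
\]
The hypothesis $t\mu = r\mu \neq \mu$ says that both of these vectors are equal and nonzero; in particular both scalar factors are nonzero, and the two roots $\beta$ and $\epsilon$ must be proportional. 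Since they both lie in the finite root system $\Phi$, this forces $\epsilon = \pm\beta$, i.e.\ $\epsilon^\vee = \pm\beta^\vee$.

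Now I would use that $s_{\alpha^\vee} = s_{-\alpha^\vee}$ to reduce to the case $\epsilon^\vee = \beta^\vee$, by replacing $\gamma^\vee$ by $-\gamma^\vee$ if necessary (which flips the sign of both $n$ and $\epsilon^\vee$ without altering the reflection). In this case the identity of the two displayed vectors becomes
\[
\bigl(\langle \beta^\vee,\mu\rangle + m\bigr)\beta = \bigl(\langle \beta^\vee,\mu\rangle + n\bigr)\beta,
\]
which, since $\beta \neq 0$, yields $m=n$ and hence $\alpha^\vee = \gamma^\vee$, so $t=r$.

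There is essentially no obstacle here; the whole statement is a geometric rigidity property of affine reflections, and the only thing to be careful about is the bookkeeping of signs when normalizing the real roots representing the two reflections. Once formula \eqref{salpha0} is in hand, the argument is a short linear-algebra verification.
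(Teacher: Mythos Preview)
Your proof is correct. It differs from the paper's argument, which is purely geometric: the paper observes that the line through $\mu$ and $t\mu=r\mu$ is perpendicular to both fixed hyperplanes of $t$ and $r$, so those hyperplanes are parallel, and since the images of $\mu$ coincide the hyperplanes (hence the reflections) must coincide. Your approach instead unpacks formula \eqref{salpha0} and reads off the finite root direction and the integer shift directly from the single nonzero vector $t\mu-\mu$. The geometric argument has the advantage of working for arbitrary affine reflections without any reference to the root-system parametrization, while your computational route is perhaps more self-contained given that \eqref{salpha0} is already in hand; both are equally short.
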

\begin{proof}
This is actually a general statement about affine reflections. If $r\mu=t\mu$, then the line between $\mu$ and $r\mu$ is orthogonal to both the hyperplanes of $r$- and $t$-fixed points. So these hyperplanes are parallel and the only possibility to have $r\mu=t\mu$ is $r=t$.
\end{proof}

\subsection{The Extended Affine Weyl Group}\label{extendedSection}

\begin{definition}
The \emph{extended affine Weyl group} $\extW$ is the subgroup of affine transformations of $\frh^\vee$ generated by $W$ and by the translations $\mathtt{t}_\lambda$, for $\lambda\in X$. 
\end{definition}

We have
\begin{equation}\label{Wextdec}
\extW \cong W\ltimes X.
\end{equation}
The group $\extW$ is usually not a Coxeter group, but we can define a length function $\ell$ on $\extW$, in terms of the isomorphism \eqref{Wextdec} as follows:
\[\ell(\mathtt{t}_\lambda w)=\sum_{\substack{\alpha\in \Phi_+\\ w(\alpha)\in \Phi_+}} |\langle \lambda,\alpha^\vee\rangle| +\sum_{\substack{\alpha\in \Phi_+\\ w(\alpha)\in -\Phi_+}} |\langle \lambda,\alpha^\vee\rangle+1| \]
for $w\in W$ and $\lambda \in X$. This extends the usual length function of $\affW$. 

The group $\extW$ contains $\affW$ as a normal subgroup. 
Let $\Omega \cu \extW$ be the subgroup of elements of length $0$. We have
\[ \Omega = \extW/\affW \cong X /\bbZ \Phi\]
and the group $\Omega$ acts by conjugation on $\affW$ via automorphisms of the Dynkin diagram. We have
\begin{equation}\label{Omegadec}
 \extW \cong \Omega \ltimes \affW.
\end{equation}

In view of \eqref{Omegadec}, we also define a Bruhat order on $\extW$ by setting $\omega_1 w_1\leq \omega_2 w_2$ for $\omega_1,\omega_2\in \Omega$ and $w_1,w_2\in \affW$ if $\omega_1=\omega_2$ and $w_1\leq w_2$ in the Bruhat order of $\affW$. Moreover, for any $w\in \extW$ we have $\ell(w)=|\{\alpha^\vee\in \affPhi^\vee_+ \mid \ell(s_{\alpha^\vee}w)<\ell(w)\}|$.

We have
\[\extW/W \cong X \aand W\backslash \extW / W\cong X_+\]
where $X_+=\{\lambda\in X \mid \langle \lambda , \beta^\vee \rangle \geq 0$ for all $\beta^\vee \in \Phi^\vee_+\}$.

We can then regard $X$ as the quotient $\extW/W$. This induces a length function on $X$, where $\ell(\lambda)$ is defined to be the minimal length of an element in its coset. This also induces a Bruhat order on $X$, which we denote again by $\leq$. The length function on $X$ can be expressed as follows.
\begin{equation}\label{totallength} \ell(\lambda)=\sum_{\substack{\alpha\in \Phi_+\\ \langle \lambda,\alpha^\vee\rangle\geq 0}} \langle \lambda,\alpha^\vee\rangle -\sum_{\substack{\alpha\in \Phi_+\\ \langle \lambda,\alpha^\vee\rangle< 0}} \left(\langle \lambda,\alpha^\vee\rangle+1\right). 
\end{equation}
Notice that if $\lambda\in X_+$ then $\ell(\lambda)=2\langle \lambda,\rho^\vee\rangle$. 

Let $\alpha^\vee=m\delta+\beta^\vee$, with $\beta^\vee\in \Phi_+^\vee$. For $\lambda\in X$, we have 
	\begin{equation}\label{bruhatorder}s_{\alpha^\vee}(\lambda)< \lambda \iff \begin{cases}\langle \lambda,\beta^\vee\rangle > m &\text{if }m\geq 0,\\
\langle \lambda,\beta^\vee\rangle <  m&\text{if }m<0.\end{cases}\end{equation}
The Bruhat order on $X$ is generated by the relations $s_{\alpha^\vee}(\lambda)<\lambda$ as above, for $\lambda\in X$ and $\alpha^\vee\in \affPhi^\vee$.

If $\lambda\in X_+$, then 
\[\mu\leq \lambda\iff \mu\in \Conv_\bbR(W\cdot \lambda) \cap (\lambda +\bbZ \Phi),\]
where $\Conv_\bbR(W\cdot \lambda)$ denotes the convex hull in $X_\bbR:=X\otimes_\bbZ \bbR$ of the $W$-orbit of $\lambda$. In particular, on $X_+$ the Bruhat order coincides with the dominance order on weights, i.e., if $\mu\in X_+$, then $\mu\leq \lambda$ if and only if $\lambda-\mu$ is a positive integral combination of simple roots.

\section{Hyperbolic Localization on the Affine Grassmannian}\label{AffineSEC}
\subsection{The Loop Group and the Affine Grassmannian}
 We now consider the affine Grassmannian of the Langlands dual group $G^\vee$.\footnote{We need to work on both sides of the geometric Satake equivalence, and this forces the notation to be rather cumbersome on one of the two sides.
We consider the representations of $G$ to be the main object of the present paper. For this reason, the geometric side is regarded as the ``dual'' side and the notation adopted ($G^\vee,\Gr^\vee, \affPhi^\vee,\ldots$) reflects this.}
 \[\Gr^\vee=G^\vee((t))/G^\vee[[t]].\]

We have an action of the \emph{augmented torus} $\affT:=T^\vee\times \bbC^*$ on $\Gr^\vee$, where $T^\vee\cu G^\vee$ is the maximal torus and $\bbC^*$ acts by loop rotation (i.e. $z\in \bbC^*$ acts by sending indeterminate $t$ to $z t$).

The character group $X^\bullet(\affT)$ can be identified with $\affX^\vee=X^\vee \oplus \bbZ \delta$ where we  interpret $\delta$ as the character
\[ \delta:\affT=T^\vee\times \bbC^*\ra \bbC^*,\qquad \delta(t,z)=z.\]
Similarly, the cocharacter group $X_\bullet(\affT)$ can be identified with $\affX=X\oplus \bbZ d$, where
we interpret $d$ as the cocharacter 
\[d:\bbC^* \ra \affT,\qquad d(z)=(1,z).\]
The Lie algebra of $\affT$ is naturally identified with $\frk^\vee$.
Every weight $\mu\in X$ induces a cocharacter $\mu:\bbC^*\ra T^\vee$. We reinterpret it as a point of $T^\vee[t,t^{-1}]\cu G^\vee((t))$ which we denote by $\tilde{L}_\mu$. We denote by $L_\mu$ the projection of $\tilde{L}_\mu$ to $\Gr^\vee$.

The set of $\affT$-fixed points on $\Gr^\vee$ coincides with the set of $T^\vee$-fixed points and they are exactly the points, for $\mu\in X$, so we have a bijection
\begin{align}\label{fixedpointsbij}
 X \quad\cong &\quad \left\{ \begin{array}{c}\affT\text{-fixed points on }\Gr^\vee\end{array}\right\} \\
 \mu \quad\mapsto &\qquad\qquad L_\mu.\nonumber
\end{align} 

\subsubsection{Root Subgroups of the Loop Group}\label{secRootSG}
For any $\beta^\vee \in \Phi^\vee$ there exists a one dimensional subgroup $U_{\beta^\vee}\subset G^\vee$, called the \emph{root subgroup}, which is isomorphic to the additive group 
\[ u_{\beta^\vee}:\bbG_a \xra{\sim} U_{\beta^\vee}\]
and such that for any $t\in T^\vee$ we have $t u_{\beta^\vee}(x)t^{-1} = u_{\beta^\vee} (\beta^\vee(t)x)$.
For any $\beta^\vee\in \Phi^\vee_+$, the groups $U_{\beta^\vee}$ and $U_{-\beta^\vee}$ generate a subgroup of $G^\vee$ isomorphic to $SL_2$ or $PGL_2$, so we have a surjective map
\[ \psi_{\beta^\vee}:SL_2(\bbC) \ra \langle U_{-\beta^\vee},U_{\beta^\vee}\rangle\]
such that 
\[\psi_{\beta^\vee}\left(\left\{\begin{pmatrix}
1 & x\\ 0 & 1
\end{pmatrix}\right\}_{x\in \bbC}\right)=U_{\beta^\vee}\aand \psi_{\beta^\vee}\left(\left\{\begin{pmatrix}
1 & 0\\ x & 1
\end{pmatrix}\right\}_{x\in \bbC}\right)=U_{-\beta^\vee}.\]

\begin{definition}\label{affinerootsubgroups}
 For $\alpha^\vee=m\delta +\beta^\vee\in \affPhi^\vee$ (resp. $\alpha^\vee=m\delta -\beta^\vee\in \affPhi^\vee$) we define the root subgroup $U_{\alpha^\vee}$ of $G^\vee((t))$ to be the image of the additive subgroup
\[\left\{\begin{pmatrix}
1 & t^m x\\ 0 & 1
\end{pmatrix}\right\}_{x\in \bbC} \qquad \left(\text{resp.}\;\left\{\begin{pmatrix}
1 & 0 \\ t^m x & 1
\end{pmatrix}\right\}_{x\in \bbC}\right) \]
under the morphism $\psi_{\beta^\vee}:SL_2(\bbC((t)))\ra G^\vee((t))$. We denote by $u_{\alpha^\vee}:\bbG_a\ra U_{\alpha^\vee}$ the isomorphism given by $u_{\alpha^\vee}(x)=\psi_{\beta^\vee}\begin{pmatrix}
1 & t^m x\\ 0 & 1
\end{pmatrix}$ (resp. $u_{\alpha^\vee}(x)=\psi_{\beta^\vee}\begin{pmatrix}
1 & 0\\ t^m x & 1
\end{pmatrix}$)
\end{definition}

\subsubsection{Bruhat Decomposition}
 Let $\pi: G^\vee[[t]]\ra G^\vee$ be the map that sends $t$ to $0$. Let $\Iw=\pi^{-1}(B^\vee)$ be the \emph{Iwahori subgroup}. The \emph{Bruhat decomposition} for the affine Grassmannian takes the following form
\[\Gr^\vee = \bigsqcup_{\mu \in X} \Iw \cdot L_\mu \]
and an orbit $\calC_\mu:=\Iw \cdot L_\mu$ is called \emph{Bruhat cell}. Each Bruhat cell $\calC_\mu$ is isomorphic to an affine space of dimension $\ell(\mu)$. In fact, we have an isomorphism
\begin{equation}\label{cell}
 \prod_{\substack{\alpha^\vee\in \affPhi^\vee_+\\ s_{\alpha^\vee}(\mu)<\mu}} U_{\alpha^\vee} \xra{\sim} \calC_\mu
\end{equation} 
defined by $u \mapsto u \cdot L_\mu$.
We can also decompose $\Gr^\vee$ into $G^\vee[[t]]$-orbits. We have
\[\Gr^\vee = \bigsqcup_{\mu \in X_+} G^\vee[[t]] \cdot L_\mu.\]
We denote by $\Gr_\lambda$ the $G^\vee[[t]]$-orbit of $L_\lambda$ in $\Gr^\vee$ and we call the closure $\bar{\Gr_\lambda}$ a \emph{Schubert variety}. The Schubert variety $\bar{\Gr_\lambda}$ is an irreducible normal projective variety of dimension $2\langle \lambda,\rho^\vee\rangle$, for $\lambda\in X_+$.

The affine Weyl group $\affW$ can also be realized as the Weyl group of the loop group $G^\vee((t))$, i.e., if $N^\vee\cu G^\vee((t))$ is the subgroup generated by $\{\tilde{L}_{\mu}\}_{\mu \in X}$ and $N_{G^\vee}(T^\vee)$, then
$\affW \cong N^\vee/T^\vee$
(see also \cite[Chapter 13]{KumKac}).
Hence, there is a natural action of $\affW$ on the set of $T^\vee$-fixed points of $\Gr^\vee$ which coincides with the affine action of $\affW$ on $X$ described in \Cref{AffineAction}, i.e. for $w\in \affW$ and $\mu\in X$ we have $w\cdot \tilde{L}_\mu=\tilde{L}_{w\cdot \mu}$. 

\subsection{The Geometric Satake Equivalence}

The main link between the representation theory of $G$ and the geometry of the affine Grassmannian of $G^\vee$ is provided by the geometric Satake equivalence.
Let $\calD^b_{G^\vee[[t]]}(\Gr^\vee)$ denote the equivariant derived category of constructible sheaves of $\bbQ$-vector spaces on $\Gr^\vee$ (cf. \cite{BLEquivariant} and \cite[\S A.3]{BRNotes}). We denote by $\Perv_{G^\vee[[t]]}(\Gr^\vee)$ the full subcategory of equivariant perverse sheaves. 
There exists a convolution product $*$ which makes $\Perv_{G^\vee[[t]]}(\Gr^\vee)$ a symmetric monoidal category.

 Let $\Rep_\bbQ G$ denote the category of finite dimensional representations of $G$ over $\bbQ$.

\begin{thm}[Geometric Satake equivalence \cite{MVGeometric}]
There is an equivalence of monoidal categories
\[\Sat: (\Perv_{G^\vee[[t]]}(\Gr^\vee),*)\xra{\sim} (\Rep_\bbQ G, \otimes_\bbQ)\]
called the \emph{Satake equivalence}.
\end{thm}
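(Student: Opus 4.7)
The plan is to follow the Tannakian reconstruction strategy of Mirkovi\'c and Vilonen. The first step is to upgrade the convolution product $*$ to a symmetric monoidal structure on $\Perv_{G^\vee[[t]]}(\Gr^\vee)$. This requires showing that the convolution of two perverse sheaves remains perverse, which is established through the semi-smallness of the convolution map, itself a consequence of the dimension estimates on the $G^\vee[[t]]$-orbits in $\Gr^\vee$. The commutativity constraint is most naturally produced through the fusion interpretation on the Beilinson--Drinfeld Grassmannian over a smooth curve, where the symmetry arises by moving two marked points past each other, and the associativity constraint appears along the same lines.

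The second step is to construct a fiber functor $F\colon \Perv_{G^\vee[[t]]}(\Gr^\vee)\to \mathrm{Vec}_{\bbQ}$. The natural candidate is global cohomology $F=H^*(\Gr^\vee,-)$, and the crucial refinement is to decompose it as $F=\bigoplus_{\mu\in X} F_\mu$, where $F_\mu(\calF)=H^*_c(S_\mu,\calF|_{S_\mu})$ and $S_\mu$ is the semi-infinite orbit through $L_\mu$. The key technical input is the MV dimension estimate $\dim(S_\mu\cap \Gr_\lambda)\leq \langle \rho^\vee,\lambda+\mu\rangle$, together with a parity/support argument, which forces each $F_\mu$ to be exact and concentrated in a single cohomological degree. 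In the language of the present paper, this is precisely the content of the hyperbolic localization functors $\HL^\eta_\mu$ for $\eta$ in the MV region, so this step is essentially already prepared by the machinery of Section \ref{AffineSEC}.

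The third step is Tannakian reconstruction (Deligne--Milne), which converts the symmetric monoidal category equipped with a graded fiber functor into an equivalence $\Perv_{G^\vee[[t]]}(\Gr^\vee)\simeq \Rep_\bbQ H$ for an affine group scheme $H$ over $\bbQ$, with a maximal torus $T_H$ corresponding to the weight grading by $X$. Semi-simplicity of the Satake category, coming from the decomposition theorem applied to the $\IC_\lambda$, implies that $H$ is reductive.

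The main obstacle is the final step: identifying the root datum of $H$ with $(\Phi,X,\Phi^\vee,X^\vee)$, and in particular explaining why roots and coroots get interchanged. I would proceed by reducing to rank one, looking at the $SL_2$ or $PGL_2$ subgroups of $H$ generated by the root subgroups, which one extracts from the minuscule Schubert varieties $\bar{\Gr_{\varpi_i^\vee}}$ (where the IC sheaf reduces to a shifted constant sheaf and the geometry is a smooth projective variety of small dimension). Matching the geometry of these rank-one pieces, and in particular the scalar appearing in the $\bbP^1$-fibrations built from affine simple reflections, with the structure constants of the would-be Chevalley generators forces the coroots of $H$ to be the roots of $G^\vee$ and vice versa. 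This is the conceptual heart of the argument and the geometric origin of Langlands duality.
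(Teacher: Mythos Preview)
The paper does not prove this theorem at all: it is stated with attribution to \cite{MV} and used as a black box, so there is no ``paper's own proof'' to compare against. Your outline is a faithful high-level summary of the Mirkovi\'c--Vilonen argument (semi-smallness for perversity of convolution, fusion for the commutativity constraint, the weight functors $F_\mu$ as an $X$-graded fiber functor, Tannakian reconstruction, and identification of the root datum), and in that sense it is correct as a sketch. Note, however, that your final step on identifying the root datum is really only a gesture: the actual argument in \cite{MV} (and in the exposition \cite{BaRi}) is more involved and does not proceed simply by ``reducing to rank one via minuscule Schubert varieties''; rather one shows $H$ is connected reductive with the correct character lattice and then pins down the roots by analyzing the Pl\"ucker-type relations or, in other treatments, by computing $\mathrm{Lie}(H)$ via first-order deformations. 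If you were writing this up in earnest, that is the place where your sketch would need substantial work.
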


For $\lambda\in X_+$ we denote by $\IC_\lambda:=\IC(\sch{\lambda},\bbQ)$ the intersection cohomology sheaf of $\sch{\lambda}$. Then $\Sat(\IC_\lambda)$ is an irreducible representation of $G$ of highest weight $\lambda$.

Let $\affHec$ denote the spherical Hecke algebra attached to our root datum $(\Phi,X,\Phi^\vee,X^\vee)$ \cite{KnoKazhdan}. We adopt the notation of \cite[\S 2.2]{LPPPre}. The Hecke algebra $\affHec$ has a standard basis $\{\bfH_{\lambda}\}_{\lambda\in X_+}$ and a Kazhdan--Lusztig basis $\{\undH_{\lambda}\}_{\lambda \in X_+}$. These are bases of $\affHec$ over $\bbZ[v,v^{-1}]$. We can write 
\begin{equation}\label{KLbasis}\undH_\lambda = \sum_{\mu \leq \lambda} h_{\mu,\lambda}(v)\bfH_\mu
\end{equation}
where $h_{\lambda,\lambda}(v)=1$ and $h_{\mu,\lambda}(v)\in v\bbZ_{\geq 0}[v]$ for any $\mu\leq \lambda$. The polynomials $h_{\mu,\lambda}(v)$ are called \emph{Kazhdan--Lusztig polynomials}.

For any object $\calF \in \calD^b_{G^\vee[[t]]}(\Gr^\vee)$ we define its \emph{character} $\ch(\calF)$ as follows.
\[\ch(\calF) = \sum_{\substack{\mu\leq \lambda\\\mu 
\in X_+}} \grdim (i_{\mu}^*\calF)
v^{-\ell(\lambda)}\bfH_{\mu}\] 
where $\grdim$ denotes the \emph{graded dimension}, that is \[\grdim(\calG)=\sum \dim H^{-i}(\calG)v^i\quad\text{ for any }\quad\calG\in \calD^b(pt).\]

By \cite{KLSchubert}, the character of the simple perverse sheaves form the KL basis, i.e. for any $\lambda \in X_+$, we have 
\begin{equation}
\label{chIC}\ch(\IC_\lambda)=\undH_\lambda.
\end{equation}

It follows that the split Grothendieck group of $\Perv_{G^\vee[[t]]}(\Gr^\vee)$ is isomorphic to the subalgebra of $\affHec$ generated by $\{\undH_{\lambda}\}_{\lambda\in X_+}$ over $\bbZ$.

\begin{remark}
In the literature KL polynomials for the affine Grassmannian 
are also known as the \emph{Kostka--Foulkes polynomials} $K_{\lambda,\mu}(q)$, although a slightly different normalization is used (see \cite[Theorem 1.8]{KatSpherical}).
For every $\mu,\lambda\in X_+$ we have
	\[K_{\lambda,\mu}(q)=h_{\mu,\lambda}(q^{\frac12}).\]

%

There is a third basis of $\affHec$ which interpolates between the standard and the KL bases.
\begin{definition}
	For any $\lambda\in X_+$ let
	\[ \bfN_{\lambda}= \sum_{\mu\leq \lambda}v^{2\langle \lambda-\mu,\rho^\vee\rangle} \bfH_\mu.\]
\end{definition}
Equivalently, $\bfN_{\lambda}$ is the character of the constant sheaf $\undQ_{\sch{\lambda}}[\dim \sch{\lambda}]$. 

\begin{thm}[$q$-atomic decomposition of characters \cite{LasCyclic,ShiMulti}]\label{atomicchar}
	In type $A$, for any $\lambda \in X_+$ we have
	\[ \undH_{\lambda}= \sum_{\mu\leq \lambda} a_{\mu,\lambda}(v) \bfN_{\mu}\]
	where $a_{\mu,\lambda}\in \bbZ_{\geq 0}[v]$.
\end{thm}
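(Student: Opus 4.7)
The existence of a decomposition with Laurent polynomial coefficients is immediate: the identity
\[ \bfN_\mu = \bfH_\mu + \sum_{\nu<\mu} v^{2\langle\mu-\nu,\rho^\vee\rangle}\bfH_\nu \]
exhibits $\{\bfN_\mu\}_{\mu\in X_+}$ as a $\bbZ[v,v^{-1}]$-basis of the span of $\{\bfH_\mu\}_{\mu\in X_+}$, so unique coefficients $a_{\mu,\lambda}(v)\in\bbZ[v,v^{-1}]$ with $\undH_\lambda=\sum_\mu a_{\mu,\lambda}(v)\bfN_\mu$ exist. The substantive content of the theorem is the positivity $a_{\mu,\lambda}(v)\in\bbZ_{\geq 0}[v]$, which is specific to type $A$.

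My plan for positivity is to match the decomposition against the atomic decomposition of the type $A$ crystal $\calB(\lambda)=\bigsqcup_i\calA_i$. Each atom $\calA_i$ should be indexed by a dominant weight $\mu_i\leq\lambda$, carry at most one tableau of any given weight, and support precisely the weights $\nu\leq\mu_i$. Combining the Lascoux-Sch\"utzenberger formula $K_{\lambda,\mu}(q)=\sum_{T\in\calB(\lambda)_\mu}q^{c(T)}$ with the identification $h_{\mu,\lambda}(v)=K_{\lambda,\mu}(v^2)$, I would split $h_{\mu,\lambda}(v)$ as a sum over atoms. The key input to verify is that for $T\in\calA_i$ with dominant weight $\wt(T)=\mu$ the charge takes the affine-linear form
\[ c(T)=c_i+\langle\rho^\vee,\mu_i-\mu\rangle \]
for some atom-dependent constant $c_i\in\bbZ_{\geq 0}$ (the residual charge of the atom, attained at the highest-weight tableau).

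Granted this, the contribution of $\calA_i$ to $h_{\mu,\lambda}(v)$ is $v^{2c_i+2\langle\rho^\vee,\mu_i-\mu\rangle}$ when $\mu\leq\mu_i$ and zero otherwise. Summing over atoms and comparing with the identity $h_{\mu,\lambda}(v)=\sum_{\mu'\geq\mu}a_{\mu',\lambda}(v)v^{2\langle\rho^\vee,\mu'-\mu\rangle}$ obtained from the definition of $\bfN_{\mu'}$ then reads off
\[ a_{\mu',\lambda}(v)=\sum_{i:\,\mu_i=\mu'}v^{2c_i}\in\bbZ_{\geq 0}[v]. \]

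The main obstacle is the structural description of the atoms together with the affine-linear behaviour of the charge on each atom. These are the type-$A$-specific inputs originally supplied by Lascoux-Sch\"utzenberger via the cyclage operation on tableaux. In the framework developed later in this paper, they can alternatively be extracted from the constancy of the atomic number $Z$ on atoms (cf.~\Cref{atomicthm}), the formula $c=Z-\tfrac{1}{2}\ell\circ\wt$ from \Cref{intromain}, and the reformulation of the Lecouvey-Lenart decomposition in \Cref{ccareLL}. Outside type $A$ no comparable decomposition is known, consistent with the failure of positivity of the $\bfN$-expansion in general.
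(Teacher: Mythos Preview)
Your proposal is correct and coincides with the paper's internal derivation. The paper states \Cref{atomicchar} as a known result of Lascoux and Shimozono without proof at that point, but later reproves it as \Cref{KLinN}: there the atomic decomposition $\calB(\lambda)=\bigsqcup_i\calA_i$ together with the constancy of $Z$ on atoms and the formula $c=Z-\tfrac12\ell\circ\wt$ yield $\undH_\lambda=\sum_i v^{2(Z(\calA_i)-\langle\wt(\calA_i),\rho^\vee\rangle)}\bfN_{\wt(\calA_i)}$, which is exactly your $a_{\mu',\lambda}(v)=\sum_{i:\mu_i=\mu'}v^{2c_i}$ since $c_i=Z(\calA_i)-\langle\mu_i,\rho^\vee\rangle$ for the dominant top weight $\mu_i=\wt(\calA_i)$.
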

\end{remark}

The coefficients $a_{\mu,\lambda}(v)$ have been explicitly  computed in type $A_2$ in \cite{LPAffine}, in type $A_3$ and $A_4$ in \cite{LPPPre}.

\subsection{The Moment Graph of the Affine Grassmannian}\label{MGsection}

Recall from \cite{BMMoment} the definition of the moment graph. This is a labeled directed graph attached to a normal variety with an action of a torus $T$ such that the vertices are the $T$-fixed points and the edges are the one-dimensional $T$-orbits.
The aim of this section is to describe the moment graph of the Schubert varieties $\sch{\lambda}$ with respect to the $\affT$-action.

We know from \eqref{fixedpointsbij} that the $\affT$-fixed points in $\Gr^\vee$ are in bijection with $X$. The Bruhat decomposition induces a Whitney stratification of $\sch{\lambda}$ and $L_\mu\in \sch{\lambda}$ if and only if $\mu\leq \lambda$ in the Bruhat order.

In order to compute the one-dimensional orbits of $\affT$ we can reduce ourselves to consider a single Bruhat cell. By \eqref{cell}, each Bruhat cell $\calC_\lambda$ is isomorphic as a $\affT$-variety to the linear representation 
\begin{equation}\label{weightdec} \bigoplus_{\substack{\alpha^\vee \in \affPhi^\vee_+\\s_{\alpha^\vee}(\lambda)<\lambda}}
V_{\alpha^\vee}\end{equation}
where we denote by $V_{\nu}$ the one-dimensional $\affT$-module of weight $\nu\in \affX^\vee$.

\begin{lemma}\label{onedimorbits}
	There exists a one-dimensional $\affT$-orbit between $L_\mu$ and $L_\lambda$ 
if and only if there exists $\alpha^\vee\in \affPhi_+^\vee$ such that $s_{\alpha^\vee}(\lambda)=\mu$.
\end{lemma}
\begin{proof}
	Let $\calO$ be a one-dimensional $\affT$-orbit. Then $\calO\subset \calC_\nu$ for some $\nu\in X$.	
	As a $\affT$-variety we have $\calC_\nu\cong \bigoplus V_{\alpha^\vee}$ as in \eqref{weightdec}.
	Let $v\in \calO$ and write $v=\sum_{\alpha^\vee\in I} v_{\alpha^\vee}$ with $v_{\alpha^\vee}\in V_{\alpha^\vee}\setminus \{0\}$. Then $\mathrm{Stab}(\calO)\subset \bigcap_{\alpha^\vee\in I} \mathrm{Stab}(\alpha^\vee)$ and since $\mathrm{Stab}(\calO)$ is a subtorus of codimension one, there can be only one element in $I$, that is we have $v=v_{\alpha^\vee}$ and $\calO= V_{\alpha^\vee}=U_{\alpha^\vee}\cdot L_\nu$. From this we see that $\nu\in \barO$ and the only other point in $\barO\setminus \calO$ is   $\lim_{z\ra \infty} u_{\alpha^\vee}(z)\cdot L_\nu$,
	 where $u_{\alpha^\vee}:\bbG_a\xra{\sim} U_{\alpha^\vee}$ is the isomorphism in \Cref{affinerootsubgroups}. This proves one direction since $\lim_{z\ra \infty} u_{\alpha^\vee}(z)\cdot L_\nu=L_{s_{\alpha^\vee}(\nu)}$ as it follows from \cite[Theorem 13.2.8 and Exercise 13.2(5)]{KumKac} and the corresponding statement for Kac--Moody groups.
	 
	 In the other direction, if $\mu=s_{\alpha^\vee}(\lambda)$ and $\mu<\lambda$, then the summand  $V_{\alpha^\vee}$ in \eqref{weightdec} is a one-dimensional orbit between $\mu$ and $\lambda$.
\end{proof}

 Moreover, if $\calO$ is a one-dimensional orbit between $\lambda$ and $s_{\alpha^\vee}(\lambda)$ the label of the corresponding edge is $\alpha^\vee\in \frk=\mathrm{Lie}(\affT)^\vee$ (see also \cite[\S 2.2]{BMMoment}). In the following simple Lemma we determine for which pairs of weights $\mu,\lambda$ there exists $\alpha^\vee$ such that $s_{\alpha^\vee}(\lambda)=\mu$. 

\begin{lemma}\label{lemmaedges}
	There exists a reflection $s_{\alpha^\vee}$ such that $s_{\alpha^\vee}(\lambda)=\mu$ if and only if $\lambda-\mu$ is a multiple of a root $\beta\in \Phi$.
	
	Moreover, if $\lambda-\mu$ is a multiple of $\beta$, then $\alpha^\vee=m\delta-\beta^\vee$ where 
	\[ m = -\frac{\langle \lambda+\mu,\beta^\vee\rangle }{2}.\]
\end{lemma}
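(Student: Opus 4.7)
The plan is to reduce both directions of the equivalence to a direct manipulation of the explicit formula \eqref{salpha0} describing how an affine reflection acts on $\frh^\vee$. Nothing deeper seems to be required, since \eqref{salpha0} makes the action transparent.

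For the ``only if'' direction, I would start from an arbitrary real root $\alpha^\vee = m\delta - \beta^\vee \in \affPhi^\vee$ (with $m\in\bbZ$, $\beta^\vee\in\Phi^\vee$), and simply apply \eqref{salpha0} to $\lambda$. Rearranging gives
\[ \lambda - s_{\alpha^\vee}(\lambda) = \bigl(\langle \beta^\vee,\lambda\rangle + m\bigr)\beta,\]
which exhibits the difference as an integer multiple of the root $\beta\in \Phi$, establishing the necessity.

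For the ``if'' direction, suppose $\lambda-\mu = k\beta$ for some $k\in \bbZ$ and $\beta\in \Phi$. I would try to invert the relation above: setting $\alpha^\vee = m\delta - \beta^\vee$, the condition $s_{\alpha^\vee}(\lambda)=\mu$ becomes, by \eqref{salpha0}, the linear equation $\langle \beta^\vee,\lambda\rangle + m = k$, which has the unique integer solution $m = k -\langle \beta^\vee,\lambda\rangle$. After possibly replacing $\beta$ by $-\beta$ (which does not change the reflection, but flips the sign of $\alpha^\vee$), one can arrange for $\alpha^\vee\in \affPhi^\vee_+$; and uniqueness of the reflection when $\mu\neq \lambda$ is already guaranteed by \Cref{notfixing}.

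Finally, to rewrite $m$ in the symmetric form stated in the lemma, I would use $\langle \beta^\vee,\beta\rangle = 2$: from $\lambda-\mu=k\beta$ we obtain $2k = \langle \beta^\vee,\lambda-\mu\rangle$, hence $k = \tfrac12\bigl(\langle \beta^\vee,\lambda\rangle - \langle \beta^\vee,\mu\rangle\bigr)$. Substituting into $m=k-\langle \beta^\vee,\lambda\rangle$ produces $m = -\tfrac12\langle \beta^\vee,\lambda+\mu\rangle$, as required. There is no real obstacle here; the only bookkeeping issue is verifying that $m$ lands in $\bbZ$ (automatic once $k$ is an integer, since $2\langle \beta^\vee,\lambda\rangle$ is even) and picking the sign of $\beta$ to make $\alpha^\vee$ positive.
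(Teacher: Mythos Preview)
Your proof is correct and follows essentially the same route as the paper's: both read off from \eqref{salpha0} that $\lambda - s_{\alpha^\vee}(\lambda)$ is a multiple of $\beta$, and both extract the formula for $m$ by pairing the resulting equation with $\beta^\vee$. You are in fact more complete than the paper (which omits the converse direction entirely), and your only harmless slip is the parenthetical about $2\langle\beta^\vee,\lambda\rangle$ being even---irrelevant, since $m=k-\langle\beta^\vee,\lambda\rangle$ is already an integer once $k$ and $\langle\beta^\vee,\lambda\rangle$ are.
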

\begin{proof}
	Assume that $\alpha^\vee=m\delta-\beta^\vee$ and that $s_{\alpha^\vee}(\lambda)=\mu$. We see from \eqref{salpha0} that $\mu=s_{\alpha^\vee}(\lambda)$ only if $\lambda-\mu$ is a multiple of the root $\beta$. Moreover, we have 
	\[\lambda -\mu - \langle \lambda ,\beta^\vee\rangle\beta= m \beta\]
	 and applying $\beta^\vee$ to both sides we obtain the desired claim.
\end{proof}

\begin{remark}\label{remarkacttranslation}
	As in \Cref{extendedSection}, we can also think of a weight $\lambda\in X$ as the coset in $\extW/W$ containing the translation $\mathtt{t}_\lambda$.
	This identification is compatible with the affine action of $\extW$ on $X$.
	In fact, for $\lambda\in X$ an $\alpha^\vee=m\delta -\beta^\vee \in \affPhi^\vee$  with $m\in \bbZ$ and $\beta^\vee\in \Phi^\vee$, by \eqref{salpha0} we have
	\[ s_{\alpha^\vee} \mathtt{t}_\lambda(\nu)=
	\nu+\lambda-(\langle \lambda+\nu,\beta^\vee\rangle +m)\beta=\nu-\langle \nu,\beta^\vee\rangle \beta + \lambda - (\langle \lambda,\beta^\vee\rangle+m)\beta= \mathtt{t}_{s_{\alpha^\vee}(\lambda)}s_{\beta^\vee}(\nu).\]
	Since $s_\beta^\vee\in W$, it follows that $s_{\alpha^\vee}\mathtt{t}_\lambda W = \mathtt{t}_{s_{\alpha^\vee}(\lambda)}W$. In particular, since the reflections $s_{\alpha^\vee}$ generate $\affW$, we have $w\mathtt{t}_\lambda W=\mathtt{t}_{w(\lambda)} W$ for any $w\in \affW$. \end{remark}

We can now describe the moment graph $\Gamma_\lambda$ of $\bar{\Gr_\lambda}$, for $\lambda \in X_+$. The vertices of the graph are all the weights $\mu\in X$ such that $\mu \leq \lambda$ in the Bruhat order. We have an edge $\mu_1\ra \mu_2$ in $\Gamma_\lambda$ if and only if $\mu_2-\mu_1$ is a multiple of a root $\beta\in \Phi$ and $\mu_1\leq \mu_2$. In this case, the label of the edge $\mu_1\ra \mu_2$ is $m\delta-\beta^\vee$, where $m$ can be obtained  as in \Cref{lemmaedges}.

We also define the moment graph $\Gamma_X$ of $\Gr^\vee$ as the union of all the graphs $\Gamma_\lambda$, for all $\lambda\in X$. We call $\Gamma_X$ the \emph{Bruhat graph} of $X$.

\begin{example}\label{A2}
	Suppose that $\Phi$ is a root system of type $A_2$ with simple roots $\alpha_1$ and $\alpha_2$. Let $\lambda=\alpha_{12}:=\alpha_1+\alpha_2$.
	The following is the moment graph $\Gamma_\lambda$ (equal colors correspond to equal labels). 
	\begin{center}
		\begin{tikzpicture}
		\path (0,3.4) node {$\alpha_{12}$};
		\path (0,-3.4) node {$-\alpha_{12}$};	
		\tikzstyle{every node}=[draw,circle,fill=black,minimum size=5pt,
		inner sep=0pt]	
		\draw (0,0) node (0) [label=above:$0$] {};
		\draw (30:3) node (2)[label=right:$\alpha_2$] {};
		\draw (90:3) node (12) {};
		\draw (150:3) node (1)[label=left:$\alpha_1$] {};
		\draw (-30:3) node (-1)[label=right:$-\alpha_1$] {};
		\draw (-90:3) node (-12) {};
		\draw (-150:3) node (-2)[label=left:$-\alpha_2$] {};
		\tikzstyle{every node}=[inner sep=0pt]
		\draw[-{Stealth[scale=1.6]}] (0) -- node[below,sloped] {$\delta+\alpha_1^\vee$} (1);
		\draw[-{Stealth[scale=1.6]}] (0) -- node[above,sloped] {$\delta+\alpha_2^\vee$} (2);
		\draw[-{Stealth[scale=1.6]}] (0) -- node[above,sloped] {$\delta+\alpha_{12}^\vee$} (12);
		\draw[-{Stealth[scale=1.6]}] (0) -- node[above,sloped] {$\delta-\alpha_1^\vee$} (-1);
		\draw[-{Stealth[scale=1.6]}] (0) -- node[below,sloped] {$\delta-\alpha_2^\vee$} (-2);
		\draw[-{Stealth[scale=1.6]}] (0) -- node[above,sloped] {$\delta-\alpha_{12}^\vee$} (-12);
		\draw[-{Stealth[scale=1.6]},blue] (1) -- node[above,sloped] {$\alpha_2^\vee$} (12);
		\draw[-{Stealth[scale=1.6]},green] (-2) -- node[above,sloped] {$\alpha_{12}^\vee$} (1);
		\draw[-{Stealth[scale=1.6]},red] (-12) -- node[below,sloped] {$\alpha_1^\vee$} (-2);
		\path[-{Stealth[scale=1.6]},blue,bend right] (-2) edge (2);
		\path[-{Stealth[scale=1.6]},red,bend left] (-1) edge (1);
		\path[-{Stealth[scale=1.6]},green,bend right] (-12) edge (12);
		\path[-{Stealth[scale=1.6]},blue] (-12) edge (-1);
		\path[-{Stealth[scale=1.6]},green] (-1) edge (2);
		\path[-{Stealth[scale=1.6]},red] (2) edge (12);
		\end{tikzpicture}
	\end{center}
\end{example}

\begin{remark}
A description of the moment graph of $\Gr^\vee$ can also be obtained from the results of Atiyah and Pressley in \cite{APConvexity}. They computed the moment map of the affine Grassmannian and showed that the image is contained in the paraboloid 
$\{ m\delta+v \mid  v\in \frh\text{ and }m=(v,v)\}\cu \frk$. 	
\end{remark}

\subsection{Hyperbolic Localization on the Affine Grassmannian}

We first recall hyperbolic localization in a general framework. 
Suppose we have a normal complex variety $Y$ with an action of a rank $1$ torus $\bbC^*$. Let $F\subset Y^{\bbC^*}$ be a connected component of the set of $\bbC^*$-fixed points. We consider the \emph{attractive set} 
\[Y^+:=\{ y \in Y \mid \lim_{t\ra 0} t\cdot y\in F\}\]
and the \emph{repulsive set}
\[Y^-:=\{ y \in Y \mid \lim_{t\ra \infty} t\cdot y\in F\}\]
We have the following commutative diagram of inclusions.
\begin{center}
	\begin{tikzpicture}
	\node (a) at (0,2) {$F$};
	\node (b) at (2,2) {$Y^+$};
	\node (c) at (0,0) {$Y^-$};
	\node (d) at (2,0) {$Y$};
	\path[right hook->] 
	(a) edge node[above] {$f^+$} (b)
	(c) edge node[above] {$g^-$} (d)
	(b) edge node[right] {$g^+$} (d)
	(a) edge node[left] {$f^-$} (c);
	\end{tikzpicture}
\end{center}

We denote by $\calD^b_{\bbC^*}(Y)$ the bounded $\bbC^*$-equivariant derived category of constructible sheaves of $\bbQ$-vector spaces on $Y$.
\begin{definition}
The \emph{hyperbolic localization functors} $\HL^{!*},\HL^{*!}:\calD^b_{\bbC^*}(Y) \ra \calD^b(F)$ are defined as 
\[\HL^{!*}:=(f^+)^!(g^+)^*\aand \HL^{*!}:=(f^-)^*(g^-)^!.\]
\end{definition}

Let $\pi^+:Y^+\ra F$ (resp. $\pi^-:Y^-\ra F$) be the projection defined by $\pi^+(y)= \lim_{t\ra 0} t\cdot y$ (resp. $\pi^-(y)= \lim_{t\ra \infty} t\cdot y$).

We recall the main theorem from \cite{BraHyperbolic}.
\begin{thm} \label{hlthm}Let $S \in \calD^b_{\bbC^*}(Y)$.
\begin{enumerate}
 \item We have natural isomorphisms \[\HL^{*!}(S)\cong  (\pi^-)_*(g^-)^!(S)\cong (\pi^+)_!(g^+)^*(S)\cong \HL^{!*}(S).\]
\item If $S$ is a semisimple perverse sheaf, then $\HL^{!*}(S)$ is the direct sum of shifts of semisimple perverse sheaves.
\end{enumerate}
\end{thm}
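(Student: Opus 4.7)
The statement is Braden's theorem, so the plan is to outline the strategy of \cite{Bra}. The long chain in part (1) splits naturally into three pieces: the ``attractive'' contraction
$(f^+)^! (g^+)^* S \cong (\pi^+)_! (g^+)^* S$, the ``repulsive'' contraction
$(\pi^-)_* (g^-)^! S \cong (f^-)^* (g^-)^! S$, and the genuinely hyperbolic identification $(\pi^+)_! (g^+)^* S \cong (\pi^-)_* (g^-)^! S$. The first two follow from the contraction principle for $\bbC^*$-equivariant sheaves; the third is the heart of the theorem.

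To establish the contraction principle, observe that $\pi^+\colon Y^+\to F$ is a $\bbC^*$-equivariant retraction of $f^+$, and the $\bbC^*$-flow carries every point of $Y^+$ into $F$ as $t\to 0$. For a $\bbC^*$-equivariant sheaf $\calG$ on $Y^+$, equivariance forces the restriction of $\calG$ along any $\bbC^*$-orbit to be constant, so the natural adjunction map $(\pi^+)_*\calG\to (f^+)^*\calG$ becomes an isomorphism (this is the standard ``contracting slice'' argument). Verdier dualizing yields $(\pi^+)_!\calG\cong (f^+)^!\calG$, which applied to $\calG=(g^+)^*S$ gives the first isomorphism. The symmetric argument on $Y^-$ with the $t\to\infty$ flow gives the second. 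Note that $(g^+)^*S$ and $(g^-)^!S$ are indeed $\bbC^*$-equivariant because $S$ is equivariant and $g^\pm$ are equivariant maps.

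For the middle isomorphism, the plan is to produce a natural transformation
\[(\pi^-)_*(g^-)^!S\;\longrightarrow\;(\pi^+)_!(g^+)^*S\]
by base change along the $\bbC^*$-action, and then to check it is an isomorphism stalkwise over $F$. One natural way to make this precise, following Braden, is to introduce a one-parameter deformation (a $\bbC^*$-equivariant family over $\bbA^1$) whose generic fiber recovers $Y$ and whose special fiber degenerates to a space where the attractive and repulsive flows are visible simultaneously; taking nearby and vanishing cycles translates the desired statement into a comparison of two natural functors which can be checked by a local computation. This hyperbolic step is the main obstacle: unlike the contraction pieces, it does not follow formally from equivariance and requires real geometric input.

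For part (2), once (1) is established, the functor $\HL^{!*}$ inherits two symmetric descriptions. The description $(f^+)^!(g^+)^*$ makes $\HL^{!*}$ left $t$-exact with respect to the perverse $t$-structure (as a composition of the right $t$-exact $(g^+)^*$ and an appropriate shift of $(f^+)^!$), while the description $(f^-)^*(g^-)^!$ makes it right $t$-exact; combined with the compatibility with Verdier duality coming from the symmetry in (1), this shows $\HL^{!*}(S)$ is perverse up to a shift on each connected component of $F$ depending on the local attractive/repulsive dimensions. Semisimplicity then follows because simple equivariant perverse summands of $S$ are sent to objects whose Verdier dual agrees with themselves up to shift, and the decomposition theorem applies on the fixed locus $F$. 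Putting everything together yields the claimed direct sum of shifts of semisimple perverse sheaves.
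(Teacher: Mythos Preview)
The paper does not prove this theorem; it is quoted from \cite{Bra} with no argument given. So there is no paper-side proof to compare against beyond the citation itself.

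Your sketch of part (1) is essentially correct. The decomposition into two contraction isomorphisms and one genuinely hyperbolic middle step is exactly how the argument runs, and your statement of the contraction principle is fine. One minor remark: the $\mathbb{A}^1$-family / specialization picture you invoke for the middle step is closer to the later approach of Drinfeld and Gaitsgory than to Braden's original paper, which constructs the comparison map directly and verifies it by an explicit local computation near each fixed point. Either route works.

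Your argument for part (2), however, has a real gap. You assert that $(f^+)^!(g^+)^*$ is left $t$-exact and $(f^-)^*(g^-)^!$ is right $t$-exact, but this does not follow: $(g^+)^*$ is right $t$-exact and $(f^+)^!$ is left $t$-exact, and composing functors that are $t$-exact in opposite directions gives no bound in either direction. In fact hyperbolic localization is \emph{not} $t$-exact up to a single shift on a component of $F$: already on the affine Grassmannian, $\HL^\eta_\mu(\IC_\lambda)$ for $\eta$ in the KL region has cohomology in several degrees (this is precisely what gives the Kazhdan--Lusztig polynomials their nontrivial shape). The self-duality you mention at the end is correct but does not by itself force semisimplicity. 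The standard argument goes differently: a simple equivariant perverse sheaf is an $\IC$ sheaf and hence, by the decomposition theorem, a direct summand of the proper pushforward of a shifted constant sheaf from a smooth equivariant resolution; hyperbolic localization commutes with proper pushforward; and on a smooth variety the Bia{\l}ynicki-Birula decomposition makes $Y^+\to F$ a Zariski-locally trivial affine bundle, so the result is computed explicitly and is visibly a sum of shifted semisimple perverse sheaves on $F$. An alternative, in the mixed setting, is by weights: $(g^+)^*$ does not raise weights and $(g^-)^!$ does not lower them, so by part (1) the hyperbolic localization of a pure complex is pure, hence splits as a direct sum of shifted simple perverse sheaves.
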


In view of \Cref{hlthm} we can simply denote by $\HL(S)$ either one of the hyperbolic localization functors $\HL^{!*}(S)$ and $\HL^{*!}(S)$.

For $i_Z:Z\hookrightarrow Y$ a closed embedding, we denote by $H^\bullet_Z(S):=H^\bullet(i_Z^!S)$ the cohomology with supports in $Z$ of $S$. We denote by $H^\bullet_c(S)$ the cohomology with compact supports of $S$. From \Cref{hlthm}(1), for any $S\in \calD^b_{\bbC^*}(Y)$ we have an isomorphism $H_{Y^{-}}(S)\cong H_c(S\mid_{Y^+})$.

We apply the previous theorem to our situation. Let $Y=\bar{\Gr_\lambda}$ be a Schubert variety where $\lambda\in X_+$. We have an action of $\affT$ on $Y$, hence any cocharacter $\eta\in \affX=X\oplus \bbZ d$ determines an action of a rank $1$ subtorus $\bbC^*$ on $Y$. We say that $z\in \sch{\lambda}$ is a fixed point for $\eta$ if it is a fixed point for the corresponding $\bbC^*$-action.

\begin{definition}
 We say that a cocharacter $\eta\in \affX$ is \emph{regular} if $\langle \eta,\alpha^\vee\rangle \neq 0$ for any $\alpha^\vee \in \affPhi^\vee$. We say that $\eta\in \affX$ is \emph{singular} if it is not regular.
\end{definition}

If we choose a regular cocharacter $\eta$, then its fixed points in $\bar{\Gr_\lambda}$ are exactly the points $L_\mu$ for $\mu \leq \lambda$. Therefore, for any $\mu\leq \lambda$ we have a hyperbolic localization functor
\[\HL_\mu^\eta: \calD^b_{\affT}(\sch{\lambda})\lra \calD^b(pt).\]

Let $\eta$ be regular and let $Y^+_{\mu,\eta}$ and $Y^-_{\mu,\eta}$ denote respectively the attractive and repulsive sets of $L_\mu$ in $\Gr^\vee$.

By \Cref{hlthm}, 
for any $\calF\in \calD^b_{\affT}(\sch{\lambda})$ we have 
\[\HL_\mu^\eta(\calF)\cong H^\bullet_{Y_{\mu,\eta}^-\cap \sch{\lambda}}(\calF) \cong H_c^\bullet(\calF|_{Y_{\mu,\eta}^+\cap \sch{\lambda}}).\]
Notice that for any $\calF\in \calD^b_{\affT}(\sch{\lambda})$ we have \[H^\bullet_{Y_{\mu,\eta}^-}(\calF)\cong H^\bullet_{Y_{\mu,\eta}^-\cap \bar{ \Gr_\lambda}}(\calF)\quad\text{and}\quad H_c^\bullet(\calF\mid_{Y_{\mu,\eta}^+})\cong H_c^\bullet(\calF\mid_{Y_{\mu,\eta}^+\cap \bar{ \Gr_\lambda}}).\]
Hence, the complex $\HL_\mu^\eta(\calF)$ does not depend on $\lambda$, i.e. it does not change if we regard $\calF$ as an object of $\calD^b_{\affT}(\sch{\lambda})$ or of $\calD^b_{\affT}(\sch{\lambda'})$, for $\lambda'>\lambda$.

\begin{remark}
	The category of $G^\vee[[t]]$-equivariant perverse sheaves is equivalent to the category of $G^\vee[[t]]\rtimes \bbC^*$-equivariant perverse sheaves (where $\bbC^*$ acts as loop rotations), and they are both equivalent to the category of constructible perverse sheaves with respect of the stratification of $G^\vee[[t]]$-orbits by \cite[Proposition A.1]{MVGeometric}.
	
	Moreover, recall from \cite[Proposition A.1]{BRNotes} that being equivariant should be regarded as a property of a perverse sheaf, rather than an additional structure on the sheaf. For this reason, we may consider $\IC_\lambda$ as $G^\vee[[t]]\rtimes \bbC^*$-equivariant objects.
\end{remark}
\begin{definition}
Let $\eta\in X\oplus \bbZ d$ be a regular cocharacter. Let $\lambda \in X_+$ and let $\mu\in X$ with $\mu\leq \lambda$. 
Then we define the following Laurent polynomials 
\[ \htil_{\mu,\lambda}^\eta(v) := \grdim\left(\HL_\mu^\eta( \IC_\lambda)\right),
\qquad h_{\mu,\lambda}^\eta(v):=\htil_{\mu,\lambda}^\eta(v)v^{\dim (Y_{\mu,\eta}^+\cap\bar{ \Gr_\lambda})}.\]
We call $h_{\mu,\lambda}^\eta(v)$ a $\eta$\emph{-Kazhdan--Lusztig polynomial}
and $\htil_{\mu,\lambda}^\eta(v)$ a \emph{renormalized $\eta$-Kazhdan--Lusztig polynomial}. 
\end{definition}

The $\eta$-Kazhdan--Lusztig polynomials can be thought of as a generalization of the ordinary Kazhdan--Lusztig polynomials $h_{\mu,\lambda}(v)$. In fact, we have the following proposition.

\begin{prop}\label{KLprop}
Let $\eta \in \affX$ be such that
$\langle \eta,\alpha^\vee \rangle >0$ for all $\alpha^\vee\in \affPhi^\vee_+$. Then $h_{\mu,\lambda}^\eta=h_{\mu,\lambda}$ for any $\mu,\lambda\in X_+$ with $\mu\leq \lambda$.
\end{prop}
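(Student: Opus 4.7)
The plan is to identify the $\eta$-attractive set at $L_\mu$ explicitly as the Bruhat cell $\calC_\mu$, apply Theorem~\ref{hlthm} to compute $\HL^\eta_\mu(\IC_\lambda)$ in terms of the stalk of $\IC_\lambda$, and then recognize the result through the character-map formula for $\undH_\lambda$.

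First I would show that under the positivity hypothesis on $\eta$, the attractive set $Y^+_{\mu,\eta}$ inside $\sch{\lambda}$ coincides with $\calC_\mu$ for every $\mu\leq \lambda$. The product decomposition~\eqref{cell} realizes $\calC_\mu$ as a product of root subgroups $U_{\alpha^\vee}$ ranging over those $\alpha^\vee\in\affPhi^\vee_+$ with $s_{\alpha^\vee}(\mu)<\mu$, each carrying $\affT$-weight $\alpha^\vee$. Since $\langle \alpha^\vee,\eta\rangle>0$ for every $\alpha^\vee\in \affPhi^\vee_+$, the one-parameter subgroup $\eta(t)$ contracts each such factor to the origin as $t\to 0$, so every point of $\calC_\mu$ flows to $L_\mu$. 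Conversely, any point of $\sch{\lambda}$ lies in a unique Bruhat cell $\calC_\nu$ and, by the same argument, flows to $L_\nu$; therefore $Y^+_{\mu,\eta}=\calC_\mu$, an affine space of complex dimension $\ell(\mu)$.

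Next, by Theorem~\ref{hlthm}(1) we have $\HL^\eta_\mu(\IC_\lambda)\cong H^\bullet_c(\calC_\mu,\IC_\lambda|_{\calC_\mu})$. Because $\IC_\lambda$ is constructible with respect to the Bruhat stratification (a refinement of the $G^\vee[[t]]$-orbit stratification) and $\calC_\mu\cong\bbA^{\ell(\mu)}$ is simply connected, the restriction $\IC_\lambda|_{\calC_\mu}$ is quasi-isomorphic to a constant complex with fibre $i_\mu^*\IC_\lambda$. By Künneth, and since $H^\bullet_c(\bbA^{\ell(\mu)})$ is concentrated in cohomological degree $2\ell(\mu)$, one obtains
\[ \htil^\eta_{\mu,\lambda}(v)\;=\;\grdim\bigl(H^\bullet_c(\calC_\mu,\IC_\lambda)\bigr)\;=\;v^{-2\ell(\mu)}\,\grdim(i_\mu^*\IC_\lambda). \]

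Finally, for $\mu\in X_+$ the length formula \eqref{totallength} gives $\ell(\mu)=2\langle \rho^\vee,\mu\rangle$, hence
\[ h^\eta_{\mu,\lambda}(v)\;=\;v^{\ell(\mu)}\,\htil^\eta_{\mu,\lambda}(v)\;=\;v^{-\langle 2\rho^\vee,\mu\rangle}\,\grdim(i_\mu^*\IC_\lambda), \]
which is exactly the coefficient of $\bfH_\mu$ in $\ch(\IC_\lambda)=\undH_\lambda$, namely $h_{\mu,\lambda}(v)$. The only non-formal point in the argument is the first step, identifying the attractive set with the Bruhat cell; the remainder is a routine application of the character formula combined with the stratification-constancy of the IC sheaf.
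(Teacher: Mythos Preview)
Your argument is correct and follows essentially the same route as the paper: identify $Y^+_{\mu,\eta}=\calC_\mu$ via the positivity hypothesis and the cell description~\eqref{cell}, use constancy of $\IC_\lambda$ along Bruhat cells to reduce the hyperbolic localization to a shift of the stalk $i_\mu^*\IC_\lambda$, and then match with the character formula. The only cosmetic difference is that the paper computes via the $(f^+)^!(g^+)^*$ description of $\HL$ (obtaining $i_\mu^!(\IC_\lambda|_{\calC_\mu})\cong i_\mu^*\IC_\lambda[-2\ell(\mu)]$), whereas you use the equivalent $(\pi^+)_!(g^+)^*$ description and compactly supported cohomology of $\bbA^{\ell(\mu)}$; both are available from Theorem~\ref{hlthm}(1) and yield the same shift.
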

\begin{proof}
 Let $\eta$ be as above. Then the attractive set of $L_\mu$ in $\sch{\lambda}$ relative to $\eta$ is precisely the Bruhat cell $\calC_\mu$, so we have $\dim(Y_{\mu,\eta}^+\cap\bar{ \Gr_\lambda})=2\langle \mu,\rho^\vee\rangle=\ell(\mu)$. 
 We need to compute
 \begin{equation}\label{KLpropEq}\grdim\left( \HL^\eta_\mu(\IC_{\lambda})\right)=\grdim\left( i_{\mu}^!(\IC_{\lambda}|_{\calC_\mu})\right)\end{equation}
where $i_{\mu}:\{ L_\mu\} \hookrightarrow \calC_\mu$ is the inclusion.
 
 Recall that the intersection cohomology $\IC_\lambda$ is constant along Bruhat cells, and that $\calC_\mu$ is an affine space of dimension $\ell(\mu)$. Therefore, we have \[i_{\mu}^!(\IC_\lambda|_{\calC_\mu})\cong i_{\mu}^*(\IC_\lambda|_{\calC_\mu})[-2\ell(\mu)]\cong i_{\mu}^*\IC_{\lambda}[-2\ell(\mu)]\]
 and we conclude since
 \[h_{\mu,\lambda}^\eta(v)=\grdim \HL_\mu^\eta(\IC_{\lambda})v^{\ell(\mu)}=\grdim (i_{\mu}^*\IC_{\lambda})v^{-2\ell(\mu)+\ell(\mu)}=h_{\mu,\lambda}(v),\]
 where the last equality follows from \eqref{chIC} and \eqref{KLbasis}.
\end{proof}

Although there are infinitely many real roots, a $\eta$ as in \Cref{KLprop} always exists. In fact, those are precisely the cocharacters $\eta = \lambda+Cd \in \affX= X\oplus \bbZ d$ such that 
\begin{equation}\label{KLchamber}
\lambda \in X_{++}\aand C>\langle \lambda,\beta^\vee\rangle\text{ for all }\beta^\vee \in \Phi_+^\vee
\end{equation}
where $X_{++}:=\{\lambda \in X \mid \langle \lambda ,\beta^\vee\rangle >0$ for any $\beta^\vee\in \Phi^\vee_+\}$.
\begin{definition}
 We call \emph{KL region} the subset of $\affX$ of the elements $\eta=\lambda+Cd$ satisfying \eqref{KLchamber}.
\end{definition}



 
\subsubsection{The MV Region}\label{MVsection}
There is another region of $\affX$ for which the hyperbolic localization functors have been thoroughly studied. 

\begin{definition}
 We call \emph{MV} (or \emph{Mirkovi\'c--Vilonen}) \emph{region} the subset 
 \[ \{\lambda + Cd\in X\oplus \bbZ d \mid \lambda \in X_{++} \text{ and }C=0\}\cu \affX\]
\end{definition}

In other words, a cocharacter $\eta$ in the MV region is a dominant cocharacter of $T^\vee$ which we think as a cocharacter of $\affT$ via the inclusion $T^\vee\ra \affT$.

The attractive and repulsive sets $Y_{\mu,\eta}^+$ and $Y_{\mu,\eta}^-$ for $\eta$ in the MV region are known as the \emph{semi-infinite orbits}.
In this case the hyperbolic localization functors are 
known as \emph{weight functors}: they correspond via the geometric Satake equivalence to the weight spaces in representation theory.

We recall some results about the weight functors from \cite{MVGeometric,BRNotes}.

\begin{thm}\label{weightfunctors}
Let $\calF\in \Perv_{G^\vee[[t]]}(\Gr^\vee)$, let $\eta\in \affX$ be in the MV region and let $\mu\in X$. Then
\begin{enumerate}
 \item We have $H^k(\HL_\mu^\eta(\calF))\cong H^k_c(\calF|_{Y_{\mu,\eta}^+})\cong H^k_{Y_{\mu,\eta}^-}(\calF)$ and they all vanish for $k\neq 2\langle \mu,\rho^\vee\rangle$,
 \item For any $k\in \bbZ$ we have
 $\displaystyle H^k(\calF)=\bigoplus_{\substack{\mu \in X\\2\langle \mu,\rho^\vee\rangle = k}} H^k_c(\calF|_{Y_{\mu,\eta}^+})$,
 \item\label{point3} For any $\lambda\in X_+$ we have $\sch{\lambda} \cap Y_{\mu,\eta}^+\neq \emptyset$ if and only if $\mu\leq \lambda$ in the Bruhat order. If this is the case, then $\Gr_\lambda \cap Y_{\mu,\eta}^+$ is of pure dimension $\langle \mu+\lambda,\rho^\vee\rangle$,
 \item\label{point4} We have
 \[H^{\langle \mu,2\rho^\vee\rangle}_c(\IC_\lambda\mid_{{Y_{\mu,\eta}^+}})=H_c^{\langle \mu+\lambda,2\rho^\vee\rangle}(\Gr_\lambda \cap Y_{\mu,\eta}^+).\]
\end{enumerate}
\end{thm}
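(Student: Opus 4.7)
The plan is to derive all four statements by combining Braden's hyperbolic localization theorem (\Cref{hlthm}) with a geometric analysis of the semi-infinite orbits, following \cite{MV,BaRi}. The key initial observation is that for $\eta$ in the MV region, $\eta$ is trivial along the loop-rotation direction, so $Y_{\mu,\eta}^+$ equals the orbit of $L_\mu$ under the ind-subgroup of $G^\vee((t))$ generated by the root subgroups $U_{\alpha^\vee}$ of \Cref{secRootSG} for which $\langle \eta,\alpha^\vee\rangle>0$. Since $\eta$ pairs positively with a real root $m\delta+\beta^\vee$ exactly when $\beta^\vee\in \Phi^\vee_+$, this subgroup is $U^+((t))$ where $U^+\cu G^\vee$ is a maximal unipotent subgroup, and $Y_{\mu,\eta}^+$ is the usual semi-infinite orbit $U^+((t))\cdot L_\mu$ studied in \cite{MV}.

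I would begin with point (3), which is the fundamental geometric input. The nonemptiness equivalence $\sch{\lambda}\cap Y_{\mu,\eta}^+\neq \emptyset \iff \mu\leq \lambda$ reduces, by taking $\affT$-fixed points, to the statement that $L_\mu\in \sch{\lambda}$ iff $\mu\leq \lambda$, which is classical and fits with the Bruhat-order description of fixed points used in \Cref{MGsection}. The pure-dimensionality assertion that $\Gr_\lambda \cap Y_{\mu,\eta}^+$ has dimension $\langle \rho^\vee,\mu+\lambda\rangle$ is the principal difficulty and the crucial contribution of \cite{MV}; one proves it by equivariant Morse theory with respect to the loop-rotation $\bbC^*$-action, using that the flow retracts each piece onto a single fixed point, or by reducing along a Bott–Samelson-type resolution to rank-one $\mathrm{SL}_2$-calculations via the convolution structure. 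This is where I expect the main obstacle to lie.

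Once (3) is established, the remaining points follow formally. The two isomorphisms in (1) are supplied directly by \Cref{hlthm}. For the vanishing outside degree $2\langle \rho^\vee,\mu\rangle$, note that $\IC_\lambda$ restricted to the open stratum $\Gr_\lambda$ is $\undQ_{\Gr_\lambda}[2\langle \rho^\vee,\lambda\rangle]$, so the top contribution to $H^k_c(\IC_\lambda|_{Y_{\mu,\eta}^+})$ comes from $H_c^{k+2\langle \rho^\vee,\lambda\rangle}(\Gr_\lambda \cap Y_{\mu,\eta}^+)$; the purity in (3) forces this to concentrate in $k=2\langle \rho^\vee,\mu\rangle$, and a check on the boundary strata (again by (3) applied to smaller $\lambda'<\lambda$) rules out lower-degree contributions. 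This extends to a general perverse $\calF$ by writing it as a summand of a semisimple complex via the decomposition theorem. Point (2) then follows from the stratification $\Gr^\vee = \bigsqcup Y_{\mu,\eta}^+$ by a spectral sequence argument that degenerates because distinct strata contribute in distinct degrees by (1). Finally, (4) is precisely the top-degree identification already used to derive (1), now read backwards: by (1) the cohomology is concentrated in degree $2\langle \rho^\vee,\mu\rangle$, and using the shift $\IC_\lambda|_{\Gr_\lambda}\cong \undQ_{\Gr_\lambda}[2\langle \rho^\vee,\lambda\rangle]$ converts this into $H_c^{2\langle \rho^\vee,\mu+\lambda\rangle}(\Gr_\lambda\cap Y_{\mu,\eta}^+)$.
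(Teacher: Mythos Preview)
The paper does not prove this theorem: it is introduced with ``We recall some results about the weight functors from \cite{MV,BaRi}'' and then simply stated, with no proof given. Your sketch is a reasonable outline of how the argument runs in those references (identify $Y_{\mu,\eta}^+$ with the semi-infinite orbit $U^+((t))\cdot L_\mu$, establish the dimension estimate (3) as the hard geometric input, and deduce (1), (2), (4) formally), so there is nothing in the paper to compare it against.
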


The irreducible components of $\Gr_\lambda \cap Y_{\mu,\eta}^+$ are called \emph{MV cycles}. It follows that the MV cycles provide a basis of $\HL_\mu^\eta(\IC_{\lambda})$. It is immediate to deduce from \Cref{weightfunctors} the following consequence for the $\eta$-KL polynomials.

\begin{corollary}\label{KLinMV}
Let $\eta$ be in the MV region. Let $\lambda \in X_+$ and $\mu\in X$ be such that $\mu\leq \lambda$. Then
\[\htil_{\mu,\lambda}^\eta(v)=d_{\mu,\lambda}v^{-2\langle\mu,2\rho^\vee \rangle}\aand h_{\mu,\lambda}^\eta(v)=d_{\mu,\lambda}v^{\langle\lambda-\mu ,\rho^\vee\rangle},\]
where $d_{\mu,\lambda}$ is the number of irreducible components of $\Gr_\lambda \cap Y_{\mu,\eta}^+$.
\end{corollary}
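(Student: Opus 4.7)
The plan is to deduce the corollary directly from \Cref{weightfunctors} by tracking the unique non-vanishing cohomological degree of $\HL^\eta_\mu(\IC_\lambda)$.

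First, I would apply \Cref{weightfunctors}(1) to $\calF = \IC_\lambda$: the cohomology groups
\[H^k(\HL^\eta_\mu(\IC_\lambda)) \cong H^k_c(\IC_\lambda|_{Y^+_{\mu,\eta}})\]
vanish for all $k$ except $k = 2\langle \rho^\vee, \mu\rangle$. Using the definition $\grdim(\calG) = \sum_i \dim H^{-i}(\calG)\, v^i$, this immediately gives
\[\htil^\eta_{\mu,\lambda}(v) = \dim H^{2\langle \rho^\vee,\mu\rangle}(\HL^\eta_\mu(\IC_\lambda))\cdot v^{-2\langle \rho^\vee,\mu\rangle},\]
so the whole content of the first formula is to identify this single dimension with $d_{\mu,\lambda}$.

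For this identification, I would invoke \Cref{weightfunctors}(4):
\[H^{2\langle\rho^\vee,\mu\rangle}_c(\IC_\lambda|_{Y^+_{\mu,\eta}}) \cong H_c^{2\langle\rho^\vee,\mu+\lambda\rangle}(\Gr_\lambda \cap Y^+_{\mu,\eta}).\]
Since $\mu\leq \lambda$, part (3) of the same theorem ensures that $\Gr_\lambda \cap Y^+_{\mu,\eta}$ is a non-empty complex variety of pure dimension $\langle \rho^\vee,\mu+\lambda\rangle$, so the group on the right is its top compactly supported cohomology. By the standard fact that $H_c^{\mathrm{top}}$ of a pure-dimensional complex algebraic variety is spanned by the fundamental classes of its irreducible components, this dimension equals $d_{\mu,\lambda}$. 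This establishes the formula for $\htil^\eta_{\mu,\lambda}$.

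For the second formula, I would use $h^\eta_{\mu,\lambda}(v) = \htil^\eta_{\mu,\lambda}(v)\, v^{\dim Y^+_{\mu,\eta}}$, interpreting $\dim Y^+_{\mu,\eta}$ as the dimension of the attractive set inside the Schubert variety $\sch{\lambda}$ (as was done in the proof of \Cref{KLprop}). Because $\Gr_\lambda$ is open dense in $\sch{\lambda}$, this dimension coincides with $\dim(\Gr_\lambda \cap Y^+_{\mu,\eta}) = \langle \rho^\vee,\mu+\lambda\rangle$, again by \Cref{weightfunctors}(3). Substituting yields $h^\eta_{\mu,\lambda}(v) = d_{\mu,\lambda}\, v^{\langle \rho^\vee,\lambda-\mu\rangle}$. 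There is no real obstacle here: the corollary is a mechanical consequence of \Cref{weightfunctors} together with the top-cohomology dimension count. The only mild care point is the convention that $\dim Y^+_{\mu,\eta}$ refers to the dimension inside $\sch{\lambda}$, which is finite, rather than inside all of $\Gr^\vee$, where the semi-infinite orbit is in a sense infinite-dimensional.
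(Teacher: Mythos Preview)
Your proof is correct and is exactly the argument the paper has in mind: the paper simply declares the corollary ``immediate'' from \Cref{weightfunctors}, and what you have written is the natural unpacking of that, using parts (1), (3), and (4) together with the fact that $H_c^{\text{top}}$ counts irreducible components. Your remark that $\dim Y^+_{\mu,\eta}$ must be read inside $\sch{\lambda}$ is also in line with the paper's conventions, since the hyperbolic localization setup there is explicitly taken with $Y=\sch{\lambda}$.
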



\subsubsection{Walls and Chambers}\label{crossingSection}

We have discussed the hyperbolic localization functor for $\eta$ either in the MV or in the KL region.
Our next goal is to study the $\eta$-Kazhdan--Lusztig polynomials for a general $\eta\in \affX$.

Let $\affX_\bbR := \affX \otimes_\bbZ \bbR \cu \frk^\vee$.

\begin{definition}\label{wall}
We call \emph{wall} a hyperplane of the form 
\[H_{\alpha^\vee}:=\{ \eta \in \affX_\bbR \mid \langle \eta,\alpha^\vee \rangle =0\}\cu \affX_\bbR\]
for $\alpha^\vee\in \affPhi^\vee$.
\end{definition}

\begin{example}
	Assume that $\Phi=\{\pm \alpha\}$ is a root system of type $A_1$. We draw in blue the walls of $\affX_\bbR$ in the basis $(\varpi,d)$. We color in red the KL region and in green the MV region.
	\begin{center}
	\begin{tikzpicture}
	\filldraw[red!40,fill=red!40] (0,0) -- (2,2) -- (0,2) -- cycle;
	\node[white] at (0.7,1.4) {KL};
	\draw[thick,->] (-4,0) -- (4,0) node[right] {$\varpi$};
	\draw[thick,->] (0,-2) -- (0,2) node[above] {$d$};
	\draw[blue] (0,-2) -- (0,2) node[left] {$\alpha^\vee$};
	\draw[blue] (2,-2) -- (-2,2) node[left] {$\delta+\alpha^\vee$};
	\draw[blue] (-2,-2) -- (2,2) node[right] {$\delta-\alpha^\vee$};
	\draw[blue] (-4,-2) -- (4,2) node[right] {$2\delta-\alpha^\vee$};
	\draw[blue] (4,-2) -- (-4,2) node[left] {$2\delta+\alpha^\vee$};
	\draw[blue] (-4,-2*2/3) -- (4,2*2/3) node[right] {$3\delta-\alpha^\vee$};
	\draw[blue] (4,-2*2/3) -- (-4,2*2/3) node[left] {$3\delta+\alpha^\vee$};
	\draw[blue] (-4,-2*2/4) -- (4,2*2/4);
	\draw[blue] (-4,-2*2/5) -- (4,2*2/5);
	\draw[blue] (4,-2*2/4) -- (-4,2*2/4);
	\draw[blue] (4,-2*2/5) -- (-4,2*2/5);
	\draw[blue] node at (3.5,0.4) {$\vdots$};
	\draw[blue] node at (3.5,-0.2) {$\vdots$};
	\draw[blue] node at (-3.5,0.4) {$\vdots$};
	\draw[blue] node at (-3.5,-0.2) {$\vdots$};
	\draw[very thick, green] (0,0) -- node[below,xshift=20] {MV} (4,0);
	\end{tikzpicture}
	\end{center}
\end{example}
	If $N$ is a positive integer and $\eta$ is a cocharacter, the hyperbolic localization functors for $\eta$ and $N\eta$ coincide. This allows us to extend the definition of the hyperbolic localization functors to $\affX_\bbQ:=\affX\otimes_{\bbZ}\bbQ$.
	
	\begin{definition}
		Let $\eta \in \affX_\bbQ$ and let $N\in \bbZ_{>0}$ be such that $N\eta \in \affX$. For $\mu\in X$, we set $\HL^\eta:=\HL^{N\eta}$ and $Y_{\mu,\eta}^\pm :=Y_{\mu,N\eta}^\pm$. 
		We also abuse notation and write 
		\[ \lim_{t\ra \star}\eta(t)\cdot z:=\lim_{t\ra \star}(N\eta)(t)\cdot z\]
		for $z\in \Gr^\vee$ and $\star\in \{0,\infty\}$. (This is well-defined, even though $\eta(t)$ is not.) We also say that $z$ is a fixed point for $\eta$ if it is a fixed point for $N\eta$.
	\end{definition}

\begin{definition}\label{chamber}
	For $\lambda\in X_+$, we denote by $\affPhi^\vee(\lambda)$ the set of all the labels present in the moment graph $\Gamma_\lambda$ described in \Cref{MGsection}. We say that a wall $H_{\alpha^\vee}$ is a $\lambda$-wall if $\alpha^\vee\in \affPhi^\vee(\lambda)$.

We call \emph{$\lambda$-chamber} (or simply chamber, if $\lambda$ is clear from the context)  the intersection of $\affX_\bbQ$ with a connected component of 
\[\affX_\bbR \setminus \bigcup_{\alpha^\vee\in \affPhi^\vee(\lambda)}H_{\alpha^\vee}.\]
We say that two chambers are \emph{adjacent} if they are separated by a single $\lambda$-wall. 

We call \emph{KL chamber} the unique chamber containing the KL region and  \emph{MV chamber} the unique chamber containing the MV region.
\end{definition}

Notice that each connected component of $\affX_\bbR\setminus \bigcup H_{\alpha^\vee}$ is an open cone, so it always contains an element of $\affX_\bbQ$, so there is a bijection between chambers and connected components.

\begin{remark}
	For most $\lambda\in X_+$ (i.e., if $\lambda$ is not too small) the cocharacters $\mu\in \affX$ in the KL chamber are precisely the cocharacters in the KL region.
	
	It will follow from \Cref{HLinChamber} that
\Cref{KLprop} and \Cref{KLinMV} still hold for any $\eta$ in the KL chamber or in the MV chamber, respectively.
\end{remark}

	As the following example shows, the KL chamber depends on $\lambda$ and it does not always coincide with the KL region.

\begin{example}
	Let $\Phi$ the root system of type $A_2$ as in \Cref{A2}. Let $\lambda=\varpi_1$ be the first fundamental weight. The moment graph $\Gamma_{\varpi_1}$ is the following graph.
	
		\begin{center}
		\begin{tikzpicture}
		\tikzstyle{every node}=[draw,circle,fill=black,minimum size=5pt,
		inner sep=0pt]	
		\draw (120:1) node (1)[label=above:$\varpi_1$] {};
		\draw (0:1) node (2)[label=right:$\varpi_2-\varpi_1$] {};
		\draw (-120:1) node (3)[label=left:$-\varpi_2$] {};
		\tikzstyle{every node}=[inner sep=0pt]
		\draw[-{Stealth[scale=1.6]},red] (2) -- node[above,sloped] {$\alpha_1^\vee$} (1);
		\draw[-{Stealth[scale=1.6]},blue] (3) -- node[above,sloped] {$\alpha_2^\vee$} (2);
		\draw[-{Stealth[scale=1.6]},green] (3) -- node[above,sloped] {$\alpha_{12}^\vee$} (1);
		\end{tikzpicture}
	\end{center}

	Consider the family of cocharacters $\eta(t):=\mu + td$, with $\mu \in X_{++}$ and $t\in \bbQ$. Then $\eta(0)$ belongs to the MV region while $\eta(t)$ belongs to the KL region if $t$ is large enough. Moreover, the family $\eta(t)$ does not cross any of the $\lambda$-walls (which are $H_{\alpha_1^\vee}$, $H_{\alpha_2^\vee}$ and $H_{\alpha_1^\vee+\alpha_2^\vee}$) meaning that it is contained in a single $\lambda$-chamber. In particular, in this case the MV chamber and the KL chamber coincide.
\end{example}

\subsubsection{Chambers and Hyperbolic Localization}

We recall Sumihiro's theorem on torus actions.
\begin{thm}\label{Sumihiro}
	Let $X$ be a normal variety equipped with an action of an algebraic torus $T$. Then each point of $X$ has a $T$-stable affine open neighborhood $U$.
	
	Moreover, there exists a $T$-equivariant closed embedding $U\hookrightarrow V$  where $V$ is a finite dimensional $T$-module.
\end{thm}
\begin{proof}
This is \cite[Cor. 2]{SumEquivariant}. The second statement follows as a corollary as explained, for example, in \cite[Prop. 2.2.5]{BriLinearization}.
\end{proof}

Since the Schubert varieties $\bar{ \Gr_\lambda}$ are normal, Sumihiro's theorem applies to our situation. We are going to use it to determine how the hyperbolic localization depends on the cocharacter $\eta$.
In the following Lemma we show that the hyperbolic localization of $\sch{\lambda}$ can only change if we cross a $\lambda$-wall.

\begin{lemma}\label{etainthemiddle}
	Let $\eta,\eta'\in \affX_\bbQ$. Assume there exists $z\in (Y_{\mu,\eta}^+\cap \sch{\lambda})\setminus Y_{\mu,\eta'}^+$. For $\tau\in [0,1]_\bbQ:=[0,1]\cap \bbQ$ consider the cocharacter $\eta_\tau:=(1-\tau)\eta+\tau\eta'\in \affX_\bbQ$. Then, there exists $\tau$ such that 
	\[ \lim_{t\ra 0}\eta_\tau(t)\cdot z\in Y_{\mu,\eta}^+\setminus \{ L_\mu\}\]
	and $\eta_\tau$ belongs to a $\lambda$-wall.  
\end{lemma}
\begin{proof}
	By \Cref{Sumihiro}, we can find a $\affT$-stable  open neighborhood $U$ of $L_\mu$ and a closed embedding $\phi:U\hookrightarrow V$ into a finite dimensional $\affT$-module $V$ such that $\phi(L_\mu)=0$. 
	Let \[V=\bigoplus_{\gamma \in P(V)} V_{\gamma}\] be the decomposition of $V$ into weight spaces, where $P(V):=\{\gamma \in \affX^\vee \mid V_\gamma \neq 0\}.$  For $v \in V$ we write $v = \sum v_\gamma$ with $v_\gamma \in V_\gamma$. For any $\zeta\in \affX$ we have 
	$\zeta(t)\cdot v = \sum t^{\langle \zeta,\gamma\rangle} v_\gamma$.

Since $U$ is $\affT$-stable and $z$ is attracted to $L_\mu$ by $\eta$ we have $z\in U$.	
Let $P(V,z):=\{\gamma \in P(V) \mid \phi(z)_\gamma\neq 0\}$.
	Since $\lim_{t\ra 0} \eta(t)\cdot \phi(z)=0$ we have $\langle \eta,\gamma\rangle >0$ for every $\gamma \in P(V,z)$. Since $\lim_{t\ra 0} \eta'(t)\cdot \phi(z)\neq 0$, there exists $\gamma\in P(V,z)$ such that $\langle \eta',\gamma\rangle\leq 0$.
	
	The set $P(V,z)$ is finite, so we can find $\tau\in [0,1]_\bbQ$ such that $\langle \eta_\tau,\gamma\rangle =0$ for some $\gamma \in P(V,z)$ and  $\langle \eta_\tau,\gamma\rangle \geq 0$ for all $\gamma \in P(V,z)$. Explicitly, we take
	\[\tau =\min_{\substack{\gamma\in P(V,z)\\ \langle \eta',\gamma\rangle \leq 0}}\frac{\langle\eta,\gamma\rangle}{\langle\eta,\gamma\rangle-\langle\eta',\gamma\rangle}.\]
	It follows that
	\[\lim_{t\ra 0} \phi(\eta_\tau(t)\cdot z)=\lim_{t\ra 0} \eta_\tau(t)\cdot \phi(z)=\sum_{\substack{\gamma \in P(V,z)\\ \langle \eta_\tau,\gamma\rangle =0}} \phi(z)_\gamma\in V\setminus \{0\}.\]
	Since $\phi$ is a closed embedding, we deduce that \[y:=\lim_{t\ra 0} \eta_\tau(t)\cdot z \in U \cap (Y_{\mu,\eta}^+\setminus \{L_\mu\}).\] 
	
	Notice that $y$ is a $\eta_\tau$-fixed point. If $\eta_\tau$ does not belong to any $\lambda$-wall, by \eqref{cell} we see that the only $\eta_\tau$-fixed points  are of the form $L_\nu$ for $\nu\leq \lambda$. However, this leads to a contradiction since $L_\nu\not \in Y_{\mu,\eta}^+$ for $\nu\neq \mu$. 
\end{proof}

\begin{prop}\label{HLinChamber}
	Let $\eta,\eta'\in \affX_\bbQ$ be cocharacters belonging to the same $\lambda$-chamber. Then $Y_{\mu,\eta}^\pm\cap \bar{\Gr_\lambda}=Y_{\mu,\eta'}^\pm\cap \bar{\Gr_\lambda}$. In particular, we have $\HL_\mu^\eta=\HL_\mu^{\eta'}$ on $\calD^b_{\affT}(\sch{\lambda})$.
\end{prop}
\begin{proof}
	Assume that there exists $z\in (Y_{\mu,\eta}^+\cap \sch{\lambda})\setminus Y_{\mu,\eta'}^+$. Then there exists $\eta_\tau$ as in \Cref{etainthemiddle} lying on a $\lambda$-wall. But this is impossible since $\lambda$-chambers are convex sets. This shows that $Y_{\mu,\eta}^+\cap \bar{\Gr_\lambda} \subset Y_{\mu,\eta'}^+\cap \bar{\Gr_\lambda}$. The proof of the opposite inclusion and of $Y_{\mu,\eta}^-\cap \bar{\Gr_\lambda}=Y_{\mu,\eta'}^-\cap \bar{\Gr_\lambda}$ are similar.
\end{proof}
	 
%
%
%
%

In general, the attractive and repulsive sets on $\bar{\Gr_\lambda}$ can only change when we cross a $\lambda$-wall, so $\HL^\eta_\mu$ only depends on the $\lambda$-chamber in which $\eta$ lies.

\subsubsection{Wall Crossing in Hyperbolic Localization}\label{sec:WallCrossing}

We now want to understand how the hyperbolic localization functor varies between adjacent chambers. Let $C_1$ and $C_2$ be two adjacent $\lambda$-chambers which border each other on a common $\lambda$-wall $H_{\alpha^\vee}$, i.e. we assume $\emptyset\neq \bar{C_1}\cap \bar{C_2} \cu H_{\alpha^\vee}$ and $C_1\cap C_2 = \emptyset$.

Choose $\eta_1\in C_1$ and $\eta_2\in C_2$. 
We assume 
\[\langle \eta_1, \alpha^\vee \rangle<0\aand \langle \eta_2, \alpha^\vee\rangle >0.\]

Then we take $\eta_s\in \affX_\bbQ$ as the point at the intersection  between $H_{\alpha}^\vee$ and the line passing through $\eta_1$ and $\eta_2$ (i.e., we have $\eta_s=(1-r)\eta_1+r\eta_2$ for some $r\in [0,1]_\bbQ$), as in the following figure.

	\begin{center}
	\begin{tikzpicture}

	\filldraw[orange!40,fill=orange!40] (0,0) -- (2,3) -- (0,3) -- cycle;
	\node at (0.5,1.4) {$C_2$};
	\filldraw[yellow!40,fill=yellow!40] (0,0) -- (-2,3) -- (0,3) -- cycle;
\node at (-0.5,1.4) {$C_1$};	
	\draw[blue, thick] (0,0) -- (0,3) node[left] {$\alpha^\vee$};
	\draw (0,0) -- (2,3);
	\draw (0,0) -- (-2,3);
	\node[draw,circle,fill=black,minimum size=5pt,
	inner sep=0pt] (a) at (-1,2.5) [label=above:$\eta_1$] {};
	\node[draw,circle,fill=black,minimum size=5pt,
inner sep=0pt] (b) at (1.2,2.1) [label=above:$\eta_2$] {};	
	\node[draw,circle,fill=black,minimum size=5pt,
inner sep=0pt]  at (0,2.32) [label=above:$\eta_s$] {};
	\draw[very thick] (a) -- (b);

	\end{tikzpicture}
\end{center}


The cocharacter $\eta_s$ is singular but we can still easily compute the $\eta_s$-fixed points, which we denote by  $\sch{\lambda}^s\subset\sch{\lambda}$. 

\begin{lemma}\label{ccetas}
	Let $\mu\leq \lambda$ and let $\frC_\mu$ be the connected component of $L_\mu$ in $\sch{\lambda}^s$. Then one of the following two statements holds.
\begin{enumerate}
	\item $s_{\alpha^\vee}(\mu)\not \leq \lambda$ or $s_{\alpha^\vee}(\mu)=\mu$, and $\frC_\mu=\{L_\mu\}$.
	\item $\mu\neq s_{\alpha^\vee}(\mu) \leq \lambda$ and $\frC_\mu=\{L_\mu,L_{s_{\alpha^\vee}(\mu)}\} \cup \calO_\mu$, where $\calO_\mu$ is the one-dimensional $\affT$-orbit between $\mu$ and $s_{\alpha^\vee}(\mu)$.

\end{enumerate}	
\end{lemma}
\begin{proof}
	
Let $p\in \Gr^\vee$ be a point fixed by $\eta_s$. We have $p\in \calC_\nu$ for some $\nu \in X$ and, by \eqref{cell}, we can write \[p=\prod_{\substack{\gamma^\vee\in \affPhi^\vee\\ s_{\gamma^\vee}(\nu)<\nu}}u_{\gamma^\vee}(x_{\gamma^\vee})\cdot L_\nu\] for some $x_{\gamma^\vee}\in \bbC$. Since $\eta_s(z)\cdot p=p$ we see that $x_{\gamma^\vee}=0$ unless $\langle \eta_s,\gamma^\vee\rangle =0$, which only occurs if $\gamma^\vee=\alpha^\vee$. It follows that $p$ is a fixed point if and only if $p=L_\nu$ for some $\nu \in X$ or, as in \Cref{onedimorbits}, if $p$ belongs to the one-dimensional $\affT$-orbit $\calO_{\nu}$ between $\nu$ and $s_{\alpha^\vee}(\nu)$.
 for any $\nu\in X$ with $s_{\alpha^\vee}(\nu)\neq \nu$. It follows that if $\nu=s_{\alpha^\vee}(\nu)$, then $L_\nu$ is an isolated point in $\sch{\lambda}^s$, and $\frC_\nu=\{L_\nu\}$.

	If $s_{\alpha^\vee}(\nu)\not \leq \lambda$, then $\nu<s_{\alpha^\vee}(\nu)$ and $\calO_{\nu}$ is contained in the Bruhat cell $\calC_{s_{\alpha^\vee(\nu)}}$, so $\calO_\nu \cap \sch{\lambda}=\emptyset$ and $\frC_\nu=\bar{\calO_\nu}\cap \sch{\lambda}=\{L_\nu\}$.
	
	If $\nu \neq s_{\alpha^\vee}(\nu) \leq \lambda$, then $\calO_{\nu}$ is contained in $\sch{\lambda}$ and $\frC_\nu=\bar{\calO_\nu}=\{L_\nu,L_{s_{\alpha^\vee}(\nu)}\}\cup \calO_{\nu}$.
\end{proof}

We start by considering the first case of \Cref{ccetas}: if $\frC=\{L_\mu\}$, then the hyperbolic localization does not change when we cross the wall $H_{\alpha^\vee}$, i.e. we have
$\HL_\mu^{\eta_1}= \HL_\mu^{\eta_2}$.

\begin{lemma}\label{HLadjacentgood}
	Assume $s_{\alpha^\vee}(\mu)\not \leq \lambda$ or $s_{\alpha^\vee}(\mu)=\mu$. Then we have $Y_{\mu,\eta_1}^\pm\cap \sch{\lambda}=Y_{\mu,\eta_2}^\pm\cap \sch{\lambda}$.
\end{lemma}
\begin{proof}
	
	Assume that there exists $z\in (Y_{\mu,\eta_1}^+\cap \sch{\lambda})\setminus Y_{\mu,\eta_2}^+$ and let  $\eta_\tau:=(1-\tau)\eta_1+\tau\eta_2$ be as in \Cref{etainthemiddle}. We have $\eta_\tau\in H_{\alpha^\vee}$ since this is the unique $\lambda$-wall separating $\eta_1$ and $\eta_2$. It follows that $\eta_\tau=\eta_s$ and that
\[y=\lim_{t\ra 0}\eta_{s}(t)\cdot z\in Y_{\mu,\eta_1}^+\setminus \{L_\mu\}.\]

Let $N\in \bbZ_{>0}$ be such that $N\eta_1\in \affX$.
	For any $t\in \bbC^*$ the element $(N\eta_1)(t)\cdot y$ is a fixed point for $\eta_s$. This means that $y$ belongs to the connected component  $\frC_\mu$ of $L_\mu$, leading to a contradiction since, by \Cref{ccetas}, $L_\mu$ should be an isolated $\eta_s$-fixed point. It follows that $Y_{\mu,\eta_1}^+\cap \sch{\lambda}\subset Y_{\mu,\eta_2}^+\cap \sch{\lambda}$. The opposite inclusion and the equality $Y_{\mu,\eta_1}^-\cap \sch{\lambda}= Y_{\mu,\eta_2}^-\cap \sch{\lambda}$ are proven similarly. 
\end{proof}

The second case in \Cref{ccetas}, when $\dim \frC_\mu =1$, is more interesting and deserves more work.
For the rest of this section, we fix $\mu,\nu \in X$ with $\nu=s_{\alpha^\vee}(\mu)$ such that $\mu < \nu\leq \lambda$ in the Bruhat order. Let $\calO$ be the one-dimensional $\affT$-orbit between $L_\mu$ and $L_\nu$.
The connected component $\frC_\mu=\{L_\mu,L_{s_{\alpha^\vee}(\mu)}\} \cup \calO=\barO$ is isomorphic to the projective line $\bbP^1(\bbC)$. To make the notation lighter we set $\HL_{(-)}^i:=\HL_{(-)}^{\eta_i}$ and $Y_{(-),i}^+:=Y_{(-),\eta_i}^+$ for $i\in \{1,2,s\}$. 	Let $\pi: \sch{\lambda} \ra \sch{\lambda}^s$ denote the projection induced by $\lim_{t\ra 0} \eta_s(t)$.

\begin{lemma}

 We have
\begin{alignat*}{2}
 Y_{\mu,1}^+ \cap \sch{\lambda} &=\pi^{-1}\left(\barO\setminus \{L_\nu\}\right),\qquad && Y_{\nu,1}^+\cap \sch{\lambda} =\pi^{-1}\left(L_\nu\right),\\ 
Y_{\mu,2}^+\cap \sch{\lambda}&=\pi^{-1}\left( L_\mu\right),\qquad && Y_{\nu,2}^+\cap \sch{\lambda} =\pi^{-1}\left( \barO \setminus \{L_\mu\}\right).
\end{alignat*}
\end{lemma}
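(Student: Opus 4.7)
My approach is a two-step Bialynicki--Birula argument: contract first along $\eta_s$ (whose fixed locus contains $\barO$ pointwise), and then refine the contraction using a small cocharacter that distinguishes $\eta_1$ from $\eta_2$. As a first ingredient, identify $\barO \cong \bbP^1$ as an $\affT$-variety. Since $\mu < \nu$, the orbit $\calO$ sits inside $\calC_\nu$, and the isomorphism \eqref{cell} realizes $\calO$ as $U_{\alpha^\vee}\cdot L_\nu$. Because $\affT$ acts on $U_{\alpha^\vee}$ via the character $\alpha^\vee$, the tangent weight of $\calO$ at $L_\nu$ is $\alpha^\vee$, and symmetrically the tangent weight at $L_\mu$ is $-\alpha^\vee$. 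Thus for any cocharacter $\eta$ with $\langle \eta, \alpha^\vee\rangle < 0$, the point $L_\mu$ attracts all of $\barO\setminus\{L_\nu\}$ on this $\bbP^1$ while $L_\nu$ is attractive only from itself; for $\langle \eta, \alpha^\vee\rangle > 0$ the roles of $L_\mu$ and $L_\nu$ are exchanged.

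Since $\HL^\eta$ depends only on the chamber containing $\eta$, I may replace $\eta_i$ by $N\eta_s + \eta_i$ for $N$ arbitrarily large: these cocharacters have the same sign on every $\beta^\vee \in \affPhi^\vee(\lambda)$ as $\eta_i$ (because $\eta_s$ lies on no wall except $H_{\alpha^\vee}$), so they still belong to $C_i$. I may therefore assume $\eta_i = N\eta_s + \varepsilon_i$ with $N \gg 0$ and a fixed $\varepsilon_i$ satisfying $\langle \varepsilon_1, \alpha^\vee\rangle<0$ and $\langle \varepsilon_2, \alpha^\vee\rangle>0$. The key two-step attraction claim is then: for each $y \in Y^+_{\barO, s}$ the limit $\lim_{t\to 0}\eta_i(t)\cdot y$ exists and equals $\lim_{t\to 0}\varepsilon_i(t)\cdot \pi(y)$, so the $\eta_i$-attractive set of each fixed point of $\barO$ is the $\pi$-preimage of its $\varepsilon_i$-attractive set inside $\barO$. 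I would prove this by passing to $\affT$-equivariant affine coordinates around each fixed point of $\sch{\lambda}^{\eta_s}$: the coordinates are weight vectors, and the convergence of a monomial $t^{\langle \eta_i, \beta^\vee\rangle}x_{\beta^\vee}$ depends only on the sign of $N\langle \eta_s, \beta^\vee\rangle + \langle \varepsilon_i, \beta^\vee\rangle$, which for $N\gg 0$ is controlled by $\eta_s$ unless $\langle \eta_s, \beta^\vee\rangle=0$, i.e.\ unless $\beta^\vee = \alpha^\vee$.

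Combining these ingredients yields all four equalities: under $\eta_1$, $L_\mu$ attracts $\barO\setminus\{L_\nu\}$ on $\barO$, giving $Y^+_{\mu,1} = \pi^{-1}(\barO\setminus\{L_\nu\})$ and $Y^+_{\nu,1} = \pi^{-1}(L_\nu)$; switching to $\eta_2$ reverses the attractive dynamics on $\barO$ and gives the remaining two identities. The main obstacle is the two-step attraction statement itself: geometrically intuitive but requiring care to handle the different Bruhat strata of $\sch{\lambda}$ simultaneously, it reduces after the coordinate computation to the elementary observation that for $N\gg 0$ the exponents $N\langle \eta_s,\beta^\vee\rangle + \langle \varepsilon_i,\beta^\vee\rangle$ have signs matching those of $\langle \eta_s, \beta^\vee\rangle$ for every weight $\beta^\vee$ not lying in the hyperplane $H_{\alpha^\vee}$.
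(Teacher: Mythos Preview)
Your proposal is correct and follows essentially the same approach as the paper's proof: both arguments linearize locally via Sumihiro's theorem (what you call ``$\affT$-equivariant affine coordinates around each fixed point''), decompose into weight spaces, and then use a perturbation/proximity argument to see that the sign of $\langle \eta_i,\gamma\rangle$ agrees with that of $\langle \eta_s,\gamma\rangle$ for every relevant weight $\gamma$ except $\gamma=\pm\alpha^\vee$. The only cosmetic difference is that the paper phrases the perturbation as ``choose $\eta_1,\eta_2,\eta_s$ close enough so that no other wall $H_\gamma$ (for $\gamma$ a weight of $V$) separates them,'' whereas you replace $\eta_i$ by $N\eta_s+\varepsilon_i$ with $N\gg 0$; these are equivalent devices.
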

\begin{proof}
	
	Notice that $\barO\setminus \{L_\nu\}=Y_{\mu,1}^+\cap \barO$ and $\{L_\mu\}=Y_{\mu,2}^+\cap \barO$. Similarly, we have $\{L_\nu\}=Y_{\nu,1}^+\cap \barO$ and $\barO\setminus \{L_\nu\}=Y_{\mu,2}^+\cap \barO$. Thus, we need to show, for $i\in\{1,2\}$ and $\zeta \in \{\mu,\nu\}$, that
	\[ Y_{\zeta,i}^+\cap \sch{\lambda}=\pi^{-1}(Y_{\zeta,i}^+\cap \barO).\]
	
	We first show that 
	\begin{equation}\label{pizeta}\pi(Y_{\zeta,i}^+\cap \sch{\lambda})\subset Y_{\zeta,i}^+\end{equation} for any weight $\zeta\in X$. Take $z \in Y_{\zeta,i}^+\cap \sch{\lambda}$. If $\pi(z)=L_\zeta$, then clearly $\pi(z)\in Y_{\zeta,i}^+$. 
	Assume now that $\pi(z)\neq L_\zeta$, i.e. that
	$z \in Y_{\zeta,i}^+\setminus Y_{\zeta,s}^+$. By \Cref{etainthemiddle}, we can find $\eta_\tau$ in the segment between $\eta_i$ and $\eta_s$ such that $\lim_{t\ra 0}\eta_\tau(t)\cdot z\in Y_{\zeta,i}^+\setminus \{L_\zeta\}$ and such that $\eta_\tau$ belongs to a $\lambda$-wall. However, the only $\lambda$-wall between $\eta_i$ and $\eta_s$ is $H_{\alpha^\vee}$, on which also $\eta_s$ lies.  It follows that $\eta_{\tau}=\eta_s$ and that \[\pi(z)=\lim_{t\ra 0}\eta_s(t)\cdot z\in Y_{\zeta,i}^+,\]
showing \eqref{pizeta}.

	 Let $N\in \bbZ_{>0}$ be such that  $N\eta_i\in \affX$.
For every $t\in \bbC^*$ we have $(N\eta_i)(t)\cdot \pi(z)\in \sch{\lambda}^s$, so $\pi(z)$ belongs to $\frC_\mu=\barO$.
	This shows $Y_{\mu,i}^+\subset \pi^{-1}(Y_{\mu,i}^+\cap \barO)$. The other inclusion also follows since if $z\not\in Y_{\mu,i}^+$, then $z\in Y_{\zeta,i}^+\subset \pi^{-1}(Y_{\zeta,i}^+)$ for some other $\zeta\neq \mu$.
	\end{proof}

\begin{corollary}\label{HLeta12}
	Let $\calF \in \Perv_{G^\vee[[t]]}(\Gr^\vee)$ and $\calG:=\HL_{\barO}^{s}(\calF)$. Let $i_\mu:\{L_\mu\}\hookrightarrow \Gr^\vee$ denote the inclusion. We have
	\begin{align*}
	\HL_{\mu}^1(\calF) \cong H_c^\bullet(\calG\mid_{\barO\setminus \{ L_\nu\}}),\qquad& \HL_{\nu}^1(\calF) \cong i_\nu^*\calG,
	\\
	\HL_{\mu}^2(\calF) \cong i_\mu^*\calG, \qquad& \HL_{\mu}^2(\calF)\cong H_c^\bullet(\calG\mid_{\barO\setminus \{ L_\mu\}}).
	\end{align*}
\end{corollary}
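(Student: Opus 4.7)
The plan is to combine Braden's theorem (\Cref{hlthm}) with the preceding Lemma and a base-change argument, thereby factoring each of the four hyperbolic localizations through the sheaf $\calG$ on the projective line $\barO$.

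First, by the isomorphism $\HL_x^\eta(\calF)\cong H_c^\bullet(Y^+_{x,\eta},\calF)$ supplied by \Cref{hlthm}, and using the identifications of $Y^+_{\mu,i}$ and $Y^+_{\nu,i}$ obtained in the Lemma, each of the four terms $\HL_\mu^1(\calF),\HL_\nu^1(\calF),\HL_\mu^2(\calF),\HL_\nu^2(\calF)$ is rewritten as $H^\bullet_c(\pi^{-1}(V),\calF)$ with $V$ running through $\barO\setminus\{L_\nu\}$, $\{L_\nu\}$, $\{L_\mu\}$, $\barO\setminus\{L_\mu\}$ respectively.

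Second, I would apply \Cref{hlthm} again, this time to the singular cocharacter $\eta_s$ acting on $\sch{\lambda}$ with respect to the connected component $\barO$ of the fixed-point set. This yields $\calG=\pi_!(g^+_{\barO,s})^*\calF$. Proper base change along the inclusion $V\hookrightarrow\barO$ then gives, for any locally closed $V\subset\barO$, the identity
\[ H^\bullet_c(\pi^{-1}(V),\calF)\cong H^\bullet_c(V,\calG|_V).\]
Combining the two steps, the open cases $V=\barO\setminus\{L_\mu\}$ and $V=\barO\setminus\{L_\nu\}$ directly produce $H_c^\bullet(\barO\setminus\{L_\mu\},\calG)$ and $H_c^\bullet(\barO\setminus\{L_\nu\},\calG)$, while for $V$ a singleton $\{L_\mu\}$ or $\{L_\nu\}$ the compactly supported cohomology coincides with the stalk $i_\mu^*\calG$ or $i_\nu^*\calG$.

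I do not anticipate a substantive obstacle: the geometric content has already been absorbed into the Lemma (via Sumihiro's theorem and the linearized local model), so the corollary reduces to a formal application of Braden's formalism together with proper base change. The only points worth double-checking are that Braden's theorem still applies to the singular cocharacter $\eta_s$ — which it does, since its fixed locus merely has connected components larger than a point, as allowed by Braden's general framework — and that the formation of $\pi_!$ in the identification $\calG=\pi_!(g^+_{\barO,s})^*\calF$ does not require $\pi$ to be proper (it does not, as $\pi_!$ denotes the derived proper direct image).
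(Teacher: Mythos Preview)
Your proposal is correct and follows essentially the same approach as the paper: the paper's proof consists of the single sentence ``This follows from proper base change applied to the following Cartesian diagrams'' together with the diagrams expressing $Y^+_{\mu,1}$ and $Y^+_{\nu,1}$ as pullbacks of $\barO\setminus\{L_\nu\}$ and $\{L_\nu\}$ along $\pi$. Your write-up spells out the same idea in more detail, making explicit the use of \Cref{hlthm} to identify $\calG$ with $\pi_!(g^+_{\barO,s})^*\calF$ and to rewrite each $\HL^i_x(\calF)$ as compactly supported cohomology of the relevant attractive set.
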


\begin{proof}
	The isomorphisms concerning $\HL^1_\mu$ and $\HL^1_\nu$ follow from proper base change applied to the following Cartesian diagrams.
\[
		\begin{tikzpicture}
		\node (a) at (0,2) {$Y_{\mu,1}^+$};
		\node (b) at (2,2) {$Y_{\barO,s}^+$};
		\node (c) at (0,0) {$\barO\setminus \{L_\nu\}$};
		\node (d) at (2,0) {$\barO$};
		\path[right hook->] 
		(a) edge (b)
		(c) edge (d);
		\path[->] 
		(b) edge node[right] {$\pi$} (d)
		(a) edge node[right] {$\pi$} (c);
		\end{tikzpicture}\qquad\qquad
		\begin{tikzpicture}
		\node (a) at (0,2) {$Y_{\nu,1}^+$};
		\node (b) at (2,2) {$Y_{\barO,s}^+$};
		\node (c) at (0,0) {$\{ L_\nu\}$};
		\node (d) at (2,0) {$\barO$};
		\path[right hook->] 
		(a) edge (b)
		(c) edge node[above] {$i_\nu$}(d);
		\path[->] 
		(b) edge node[right] {$\pi$} (d)
		(a) edge node [right] {$\pi$} (c);
		\end{tikzpicture}\]
		The proof of the other isomorphisms is analogous.
\end{proof}


We now compute the hyperbolic localization functor for $\eta_s$.
For $p(v)=\sum p_i v^i\in \bbZ[v,v^{-1}]$ we use $p(v)\cdot \calF$ to denote $\bigoplus \calF[i]^{\oplus p_i}$. We denote by $\undQ_{\barO}$ the constant sheaf on $\barO$ and by $\undQ_{\mu}$ the skyscraper sheaf on $L_\mu$.

\begin{lemma}\label{HLetas}
Let $\calF \in \Perv_{G^\vee[[t]]}(\Gr^\vee)$. Then there exist $p(v),p'(v)\in \bbZ_{\geq 0}[v,v^{-1}]$ such that
 \[\HL_{\bar{\calO}}^{s}(\calF)\cong p(v)\cdot \undQ_{\barO}\oplus p'(v)\cdot \undQ_\mu.\]
\end{lemma}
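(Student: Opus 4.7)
My approach would have two main steps. First, I would apply Braden's theorem (\Cref{hlthm}(2)) combined with equivariance to identify the possible simple summands of $\calG := \HL_{\barO}^{\eta_s}(\calF)$. Since any object of $\Perv_{G^\vee[[t]]}(\Gr^\vee)$ is semisimple (being a direct sum of shifts of the $\IC_\lambda$), Braden's theorem guarantees that $\calG$ is a direct sum of shifts of semisimple perverse sheaves on $\barO$. The $G^\vee[[t]]$-equivariance of $\calF$ descends to equivariance of $\calG$ for the quotient torus $\affT/\bbC^*_{\eta_s}$, which acts on $\barO\cong\bbP^1$ with the two fixed points $L_\mu$ and $L_\nu$. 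The simple equivariant perverse sheaves for this action are the intersection cohomology sheaf $\undQ_{\barO}[1]$ and the two skyscrapers $\undQ_\mu,\undQ_\nu$, so we may write
\[\calG\cong p(v)\cdot\undQ_{\barO}[1]\oplus q(v)\cdot\undQ_\mu\oplus r(v)\cdot\undQ_\nu\]
for some $p(v),q(v),r(v)\in\bbZ_{\geq 0}[v,v^{-1}]$.

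The second and main step is to establish the vanishing $r(v)=0$, which is the main obstacle. My plan is to compute both the stalk and the costalk of $\calG$ at $L_\nu$ in two different ways. By \Cref{HLeta12} we already know $i_\nu^*\calG\cong\HL_\nu^{\eta_1}(\calF)$; the same proper-base-change argument applied now to the second expression $\calG\cong(\pi_s^-)_*(g_s^-)^!\calF$ from Braden's theorem, together with the repelling-set analog of the Lemma identifying the fibers of $\pi_s^+$, would give $i_\nu^!\calG\cong\HL_\nu^{\eta_2}(\calF)$. On the other hand, reading off the contributions of the three candidate simple summands to the stalk and costalk yields
\[\grdim i_\nu^*\calG = v\,p(v)+r(v), \qquad \grdim i_\nu^!\calG = v^{-1}p(v)+r(v).\]

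To conclude $r(v)=0$ from these two identities without presupposing the wall-crossing formula of the introduction (which is exactly what this lemma is meant to establish), I would appeal to the parity of $\IC_\lambda$ on $\Gr^\vee$: the affine paving of $\sch{\lambda}$ by $\Iw$-orbits of even real dimension makes $\IC_\lambda$ a parity sheaf, and this parity is preserved by hyperbolic localization through the isomorphism in \Cref{hlthm}(1). At $L_\nu$, the contributions $v^{\pm 1}p(v)$ (from $\undQ_{\barO}[1]$) and $r(v)$ (from $\undQ_\nu$) land in cohomological degrees of opposite parity, and the parity constraint forces the skyscraper multiplicity $r(v)$ to vanish identically. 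Making this last parity step fully rigorous is the most delicate point of the argument and is where I expect the main technical difficulty to lie.
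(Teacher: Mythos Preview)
Your first step is correct and close to the paper's: Braden's theorem applied to the semisimple sheaf $\calF$ tells you that $\calG$ is a direct sum of shifts of simple perverse sheaves on $\barO\cong\bbP^1$, and using only torus equivariance you arrive at three candidate simples $\undQ_{\barO}[1]$, $\undQ_\mu$, $\undQ_\nu$.

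The second step, however, has a genuine gap. Parity constraints on $i_\nu^*\calG$ and $i_\nu^!\calG$ do not suffice to force $r(v)=0$. For instance, the hypothetical decomposition $\calG=\undQ_{\barO}[1]\oplus\undQ_\nu[1]$ (so $p(v)=1$, $q(v)=0$, $r(v)=v$) gives
\[
\grdim i_\nu^*\calG=2v,\qquad \grdim i_\nu^!\calG=v^{-1}+v,
\]
both supported in a single parity; the same computation at $L_\mu$ does not see $r(v)$ at all. So even granting that hyperbolic localizations of $\IC_\lambda$ are parity complexes, you cannot separate the $v^{\pm1}p(v)$ and $r(v)$ contributions by parity alone. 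The difficulty you flag is not merely technical: the argument as stated cannot be completed along these lines.

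The paper avoids this entirely by strengthening the equivariance in your first step. Since $\eta_s$ lies on the wall $H_{\alpha^\vee}$, its image commutes with the rank-one subgroup $G_{\alpha^\vee}\cong SL_2$ or $PGL_2$ generated by $U_{\pm\alpha^\vee}$. Because $\calF$ is $\Iw$-equivariant and $B_{\alpha^\vee}:=G_{\alpha^\vee}\cap\Iw$ is a Borel of $G_{\alpha^\vee}$, the hyperbolic localization $\calG$ is $B_{\alpha^\vee}$-equivariant, not merely torus-equivariant. The Borel $B_{\alpha^\vee}$ acts on $\barO$ with exactly two orbits: the closed point $\{L_\mu\}$ and the open affine line containing $L_\nu$. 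Hence the only simple $B_{\alpha^\vee}$-equivariant perverse sheaves on $\barO$ are $\undQ_{\barO}[1]$ and $\undQ_\mu$, and the skyscraper $\undQ_\nu$ is excluded outright. This single observation replaces your entire second step.
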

\begin{proof}
Recall the root subgroup $U_{\alpha^\vee}$ from \Cref{secRootSG}. Let $G_{\alpha^\vee}$ be the group generated by $U_{\alpha^\vee}$ and $U_{-\alpha^\vee}$.
The group $G_{\alpha^\vee}$ is isomorphic to $SL_2$ or $PGL_2$. Then $B_{\alpha^\vee}= G_{\alpha^\vee}\cap \Iw$ is a Borel subgroup of $G_{\alpha^\vee}$ containing $U_{\alpha^\vee}$.

If $N\in \bbZ_{>0}$ is such that $N\eta_s\in \affX$, then the image of $N\eta_s$ commutes with $G_{\alpha^\vee}$. Since $\calF$ is $B_{\alpha^\vee}$-equivariant, by definition also $\HL_{\barO}^{s}(\calF)$ is $B_{\alpha^\vee}$-equivariant. The claim now follows by \Cref{hlthm}(2) since $\undQ_{\barO}$ and $\undQ_\mu$ are the only simple $B_{\alpha^\vee}$-equivariant perverse sheaves on $\bar{\calO}\cong \bbP^1(\bbC)$. 
\end{proof}

We are finally ready to compute how the $\eta$-KL polynomials change between cocharacters $\eta_1$ and $\eta_2$ in adjacent chambers.

\begin{prop}\label{crossing}
Let $\nu=s_{\alpha^\vee}(\mu)$ and assume that $\mu<\nu\leq \lambda$.
	We have
	\begin{align*} \htil_{\nu,\lambda}^{\eta_2}(v) =&\; v^{-2}\cdot \htil_{\nu,\lambda}^{\eta_1}(v)\\
 \htil_{\mu,\lambda}^{\eta_2}(v) =&\; \htil_{\mu,\lambda}^{\eta_1}(v)+ (1-v^{-2}) \cdot \htil_{\nu,\lambda}^{\eta_1}(v)\end{align*}
\end{prop}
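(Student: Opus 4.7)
The plan is to combine \Cref{HLetas} (which describes $\HL^{\eta_s}_{\bar\calO}(\IC_\lambda)$ on the line $\bar\calO\cong\bbP^1$) with \Cref{HLeta12} (which expresses each $\HL^{\eta_i}_{(-)}(\IC_\lambda)$ as either a costalk or a compactly-supported cohomology of this line), reducing the wall-crossing identities to an elementary cohomology computation on $\bbP^1$ with two marked points.

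First, I would write, by \Cref{HLetas},
\[ \calG := \HL^{\eta_s}_{\bar\calO}(\IC_\lambda) \;\cong\; p(v)\cdot \undQ_{\bar\calO}\oplus p'(v)\cdot \undQ_\mu \]
for some $p,p'\in\bbZ_{\geq 0}[v,v^{-1}]$. By \Cref{HLeta12}, the four relevant graded dimensions are those of $i_\nu^*\calG$, $i_\mu^*\calG$, $H^\bullet_c(\bar\calO\setminus\{L_\nu\},\calG)$ and $H^\bullet_c(\bar\calO\setminus\{L_\mu\},\calG)$, which I would compute in turn using that each deleted line is an $\bbA^1$. Recall that $H^\bullet_c(\bbA^1,\undQ_{\bbA^1})$ is concentrated in degree $2$, contributing $v^{-2}$ to the graded dimension, while the stalk/costalk of $\undQ_{\bar\calO}$ at any point is $\bbQ$ in degree $0$. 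Explicitly this yields
\begin{align*}
\htil_{\nu,\lambda}^{\eta_1}(v) &= \grdim\, i_\nu^*\calG = p(v), \\
\htil_{\mu,\lambda}^{\eta_1}(v) &= \grdim\, H^\bullet_c(\bar\calO\setminus\{L_\nu\},\calG) = v^{-2}p(v)+p'(v),\\
\htil_{\mu,\lambda}^{\eta_2}(v) &= \grdim\, i_\mu^*\calG = p(v)+p'(v),\\
\htil_{\nu,\lambda}^{\eta_2}(v) &= \grdim\, H^\bullet_c(\bar\calO\setminus\{L_\mu\},\calG) = v^{-2}p(v),
\end{align*}
where in the third line the skyscraper contributes because $L_\mu$ lies in the open $\bar\calO\setminus\{L_\nu\}$, and in the fourth it does not contribute because $L_\mu$ has been removed.

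The two identities in the proposition then follow by direct substitution: $\htil_{\nu,\lambda}^{\eta_2} = v^{-2}p(v)= v^{-2}\htil_{\nu,\lambda}^{\eta_1}$, and
\[\htil_{\mu,\lambda}^{\eta_1}+(1-v^{-2})\htil_{\nu,\lambda}^{\eta_1} = (v^{-2}p+p') + (1-v^{-2})p = p+p' = \htil_{\mu,\lambda}^{\eta_2}.\]

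\textbf{Main obstacle.} The computation itself is essentially mechanical; the only subtlety is bookkeeping. I have to make sure that the sign conventions $\langle\eta_1,\alpha^\vee\rangle<0$, $\langle\eta_2,\alpha^\vee\rangle>0$ correctly identify $L_\mu$ (the Bruhat-smaller point) as the attractor on $\bar\calO$ for $\eta_1$ and as the repeller for $\eta_2$, so that the four open/closed pieces in \Cref{HLeta12} line up as above. I would double-check this by noting that $\alpha^\vee$ labels the edge $\mu\to \nu$ in the moment graph, hence is the weight of the normal direction at $L_\nu$ pointing towards $L_\mu$; with $\langle\eta_1,\alpha^\vee\rangle<0$ the contraction $\lim_{t\to 0}\eta_1(t)$ then fixes $L_\nu$ and sweeps $\bar\calO\setminus\{L_\nu\}$ into $L_\mu$, which gives the asserted attractive sets. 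Once this is pinned down, the remainder is the graded-dimension arithmetic above.
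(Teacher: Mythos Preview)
Your proof is correct and follows essentially the same route as the paper: apply \Cref{HLetas} to write $\calG=\HL_{\bar\calO}^{\eta_s}(\IC_\lambda)\cong p(v)\cdot\undQ_{\bar\calO}\oplus p'(v)\cdot\undQ_\mu$, then use \Cref{HLeta12} together with $H^\bullet_c(\bbA^1,\undQ)=\bbQ[-2]$ to read off the four graded dimensions and substitute. Two cosmetic slips: in your plan you say ``costalk'' but you (correctly) compute stalks $i_\mu^*,i_\nu^*$; and your parenthetical explanation ``in the third line the skyscraper contributes because $L_\mu$ lies in the open $\bar\calO\setminus\{L_\nu\}$'' really belongs to the second line---in the third line the skyscraper contributes simply because you are taking the stalk at its support point $L_\mu$.
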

\begin{proof}
	From \Cref{HLetas} we know that $\HL_{\barO}^s(\IC_\lambda)\cong p(v)\cdot \undQ_{\barO} \oplus p'(v) \cdot \undQ_{\mu}$ for some $p,p' \in \bbZ[v,v^{-1}]$. Recall that $\barO\setminus \{L_\mu\}\cong \barO\setminus \{L_\nu\}\cong \bbC$ and that $H^\bullet_c(\bbC)=\bbQ[-2]$. From \Cref{HLeta12} it follows that
	\begin{align*}
	\HL_\mu^1(\IC_\lambda)=\left(v^{-2}p(v)+p'(v)\right)\cdot \bbQ, \qquad & \HL_\nu^1(\IC_\lambda)=p(v)\cdot \bbQ,\\
	\HL_\mu^2(\IC_\lambda)=\left(p(v)+p'(v)\right)\cdot \bbQ, \qquad & \HL_\nu^2(\IC_\lambda)=v^{-2}p(v) \cdot \bbQ
	\end{align*}
	and the claim follows.
\end{proof}

\begin{example}	
	Let $\lambda=\alpha_1+\alpha_2$ as in \Cref{A2}.
	 There are exactly three $\lambda$-walls separating the MV chamber from the KL chamber: $H_{\delta-\alpha_1^\vee}$, $H_{\delta-\alpha_2^\vee}$ and $H_{\delta-\alpha_1^\vee-\alpha_2^\vee}$.
	
	Consider the family of cocharacters \[\eta:\bbQ_{\geq 0}\lra \affX_\bbQ,\qquad \eta(t)=A_1\varpi_1+A_2\varpi_2+td.\] 
	with $A_1,A_2\in \bbQ$ such that $0<A_1< A_2$. Then $\eta(0)$ is in the MV region, $\eta(t)$ is in the KL chamber for $t>A_1+A_2$ and we cross the three walls $H_{\delta-\alpha_1^\vee}$, $H_{\delta-\alpha_2^\vee}$ and $H_{\delta-\alpha_1^\vee-\alpha_2^\vee}$ at $t=A_1$, $t=A_2$ and $t=A_1+A_2$, respectively.
	We can choose arbitrarily $\eta_0\in \eta([0,A_1))$, $\eta_1\in \eta((A_1,A_2))$, $\eta_2\in \eta((A_2,A_1+A_2))$ and $\eta_3\ \in\eta((A_1+A_2,\infty))$, so that they lie in adjacent chambers.
	
	 Then, thanks to \Cref{crossing}, we can compute the related renormalized $\eta$-KL polynomials in any chamber as follows.
	 
\begin{center}
\begin{tabular}{c|c|c|c}
	$\htil^{\eta_0}_{\mu,\alpha_{12}}(v)$ & 	$\htil^{\eta_1}_{\mu,\alpha_{12}}(v)$ & 	$\htil^{\eta_2}_{\lambda,\alpha_{12}}(v)$ & 	$\htil^{\eta_3}_{\lambda,\alpha_{12}}(v)$\\
\hline
	\begin{tikzpicture}[scale=0.7]
	\tikzstyle{every node}=[draw,circle,fill=black,minimum size=5pt,
	inner sep=0pt]
	\draw (0,0) node [label=above:$2$] {};
	\draw (30:2) node [label=above:$v^{-2}$] {};
	\draw (90:2) node [label=above:$v^{-4}$] {};
	\draw (150:2) node [label=above:$v^{-2}$] {};
	\draw (-30:2) node [label=above:$v^2$] {};
	\draw (-90:2) node [label=below:$v^4$] {};
	\draw (-150:2) node [label=above:$v^2$] {};
	\end{tikzpicture} &
		\begin{tikzpicture}[scale=0.7]
	\tikzstyle{every node}=[draw,circle,fill=black,minimum size=5pt, inner sep=0pt]
	\draw (0,0) node (a) [label=above:$1+v^2$] {};
	\draw (30:2) node [label=above:$v^{-2}$] {};
	\draw (90:2) node [label=above:$v^{-4}$] {};
	\draw (150:2) node [label=above:$v^{-2}$] {};
	\draw (-30:2) node [label=right:$v^2$] {};
	\draw (-90:2) node [label=below:$v^4$] {};
	\draw (-150:2) node (b) [label=left:$1$] {};
	\tikzstyle{every node}=[]
	\draw[->,thick, red] (a) -- node[above,sloped] {$\delta-\alpha_1^\vee$} (b) [label=above:$s_\alpha^\vee$];
	\end{tikzpicture}
	&
	\begin{tikzpicture}[scale=0.7]
	\tikzstyle{every node}=[draw,circle,fill=black,minimum size=5pt, inner sep=0pt]
	\draw (0,0) node(a) [label=above:$2v^2$] {};
	\draw (30:2) node [label=above:$v^{-2}$] {};
	\draw (90:2) node [label=above:$v^{-4}$] {};
	\draw (150:2) node [label=above:$v^{-2}$] {};
	\draw (-30:2) node(b) [label=right:$1$] {};
	\draw (-90:2) node [label=below:$v^4$] {};
	\draw (-150:2) node [label=left:$1$] {};
	\tikzstyle{every node}=[]
	\draw[->,thick, red] (a) -- node[above,sloped] {$\delta-\alpha_2^\vee$} (b);
	\end{tikzpicture}
	&
	\begin{tikzpicture}[scale=0.7]
	\draw (0,0.6) node {$v^2+v^4$};
	\tikzstyle{every node}=[draw,circle,fill=black,minimum size=5pt, inner sep=0pt]
	\draw (0,0) node(a)  {};
	\draw (30:2) node [label=above:$v^{-2}$] {};
	\draw (90:2) node [label=above:$v^{-4}$] {};
	\draw (150:2) node [label=above:$v^{-2}$] {};
	\draw (-30:2) node [label=right:$1$] {};
	\draw (-90:2) node(b) [label=below:$v^2$] {};
	\draw (-150:2) node [label=left:$1$] {};
	\tikzstyle{every node}=[]
	\draw[->,thick, red] (a) -- node[above,sloped] {$\delta-\alpha_{12}^\vee$} (b);
	\end{tikzpicture}
	
\end{tabular}
\end{center}

\end{example}

\section{Charge Statistics and Swapping Functions}\label{CrystalSection}

We consider now the split reductive group $G$. Let $T\cu G$ be a maximal torus. Recall that $X$ is the set of characters of $T$. For any dominant weight $\lambda\in X_+$, let $L(\lambda)=\Sat(\IC_\lambda)$ be the corresponding irreducible module of $G$ with highest weight $\lambda$.

In \Cref{sec:WallCrossing} we have discussed
wall crossing in hyperbolic localization for the affine Grassmannian $\Gr^\vee$ of $G^\vee$, the Langlands dual group of $G$. In this section we want to define an operation on the category of representations of $G$ that corresponds to wall crossing via geometric Satake.

For $\lambda\in X_+$, let $\calB(\lambda)$ be the crystal corresponding to the representation $L(\lambda)$.
There are several equivalent different constructions of $\calB(\lambda)$. 
The crystal $\calB(\lambda)$ can be constructed algebraically using the quantum group of $G$ \cite{KasCrystal,LusCanonical} or geometrically via the  geometric Satake correspondence \cite{BGCrystals,BFGUhlenbeck}. In this case the elements of the crystal are the MV cycles and the crystal operators are induced by a geometric operation on cycles. It is shown in \cite{KamCrystal} that the algebraic and geometric constructions are equivalent.

\

For $T\in \calB(\lambda)$ we denote by $\wt(T)\in X$ the weight of $T$.
Let $\calB(\lambda)_\mu$ denote the set of elements of the crystal of weight $\mu$. Let $\calB^+(\lambda)=\{ T\in \calB(\lambda) \mid \wt(T)\in X_+\}$.
Recall the definition of charge from \cite{LSLe,LLTCrystal}.
\begin{definition}
	A \emph{charge statistic} on $\calB(\lambda)$ is a function $c:\calB^+(\lambda)\ra \bbZ_{\geq 0}$ such that for any $\mu,\lambda\in X_+$ with $\mu\leq \lambda$ we have
	\[h_{\mu,\lambda}(q^{\frac12})=K_{\lambda,\mu}(q)=\sum_{T\in \calB(\lambda)_\mu} q^{c(T)}\in \bbZ[q].\]
\end{definition}

It is convenient to extend the definition of charge to renormalized $\eta$-Kazhdan--Lusztig polynomials. 

\begin{definition}
	Let $\eta\in \affX$ be a regular cocharacter.
	A \emph{renormalized charge statistic} (or \emph{recharge} for short) for $\eta$ on $\calB(\lambda)$ is a function $r(\eta,-):\calB(\lambda) \ra \frac12\bbZ$
	such that 
	\[\htil^\eta_{\mu,\lambda}(q^{\frac12})=\sum_{T\in \calB(\lambda)_\mu} q^{r(\eta,T)}\in \bbZ[q^{\frac12},q^{-\frac12}].\]
\end{definition}

By \Cref{KLprop}, if $\eta$ is in the KL chamber and $r(\eta,-)$ is a recharge for $\eta$, then we obtain a charge statistic $c$ by setting \begin{equation}\label{charge}
c(T):=r(\eta,T)+\frac12 \ell(\wt(T)).
\end{equation}

\begin{remark}

In the literature a charge statistic is usually only defined on the dominant part of the crystal, whereas we define the recharge on the whole crystal.
However, this distinction is not substantial: if we allow the charge statistic to take values in $\frac{1}{2} \mathbb{Z}$, it can always be naturally extended from the dominant part to a statistic $c$ on the whole crystal such that
	\[h_{\mu,\lambda}^\eta(q^{\frac12})=\sum_{T\in \calB(\lambda)_\mu} q^{c(T)}\in \bbZ[q]\]
for any $\mu\in X$.

			In fact, the sheaf $\IC_\lambda$ is locally constant on $G^\vee[[t]]$-orbits, so its stalk in $L_\mu$ and $L_{w(\mu)}$ are isomorphic, for any $\mu \in X$ and $w \in W$. It follows by \eqref{KLpropEq} that 
		\begin{equation*} \htil_{w(\mu),\lambda}^\eta(v)=\grdim(\HL_{w(\mu)}^\eta(\IC_\lambda))=\grdim (i_{w(\mu)}^*\IC_\lambda[-2\ell(w(\mu))])= \htil_{\mu,\lambda}^\eta(v)v^{2(\ell(\mu)-\ell(w(\mu)))}
		\end{equation*}
		and that $h_{w(\mu),\lambda}^\eta(v)=h_{\mu,\lambda}^\eta(v)v^{\ell(\mu)-\ell(w(\mu))}$.
		Thus, we can simply extend the definition of $c$ to $\calB(\lambda)$ by setting \[c(w(T)):=c(T)+\frac12(\ell(\wt(T))-\ell(w(\wt(T))))\] for any $T\in \calB^+(\lambda)$ and $w\in W$.
\end{remark}

Assume that $\eta_1$ and $\eta_2$ are in adjacent $\lambda$-chambers as in \Cref{sec:WallCrossing}, that is, there exists a unique root $\alpha^\vee\in \affPhi^\vee_+(\lambda)$ such that $\langle \eta_1,\alpha^\vee\rangle<0$ and $\langle \eta_2,\alpha^\vee\rangle>0$. Suppose that we are given a recharge $r(\eta_1,-)$ for $\eta_1$. We want to construct a recharge for $\eta_2$. 

\begin{lemma}\label{functionpsi}
	Let $\mu\in X$ and $\nu=s_{\alpha^\vee}(\mu)$. Assume that $\mu<\nu\leq \lambda$. Then there exists an injective function $\psi_{\nu}:\calB(\lambda)_{\nu} \ra \calB(\lambda)_\mu$ such that
	\begin{equation}\label{findf}
	r(\eta_1,\psi_\nu(T))=r(\eta_1,T)-1 \qquad \text{ for every }T\in \calB(\lambda)_\nu. 
	\end{equation}
\end{lemma}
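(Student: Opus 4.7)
The plan is to reduce the existence of $\psi_\nu$ to a coefficient-wise positivity statement between $\htil^{\eta_1}_{\mu,\lambda}$ and $\htil^{\eta_1}_{\nu,\lambda}$ that is essentially already contained in the wall-crossing analysis of the previous subsection. The first step is to unpack the formulas for these two renormalized polynomials. By \Cref{HLetas} one has
\[\HL_{\barO}^{\eta_s}(\IC_\lambda) \cong p(v)\cdot \undQ_{\barO} \oplus p'(v)\cdot \undQ_\mu\]
for some $p(v), p'(v) \in \bbZ_{\geq 0}[v,v^{-1}]$; combining this with \Cref{HLeta12} yields
\[\htil^{\eta_1}_{\nu,\lambda}(v) = p(v), \qquad \htil^{\eta_1}_{\mu,\lambda}(v) = v^{-2}p(v) + p'(v).\]
In particular, $\htil^{\eta_1}_{\mu,\lambda}(v) - v^{-2}\htil^{\eta_1}_{\nu,\lambda}(v) = p'(v)$ has non-negative coefficients.

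The second step is to interpret this positivity at the level of the recharge. Setting $v = q^{1/2}$ and comparing the coefficient of $q^k$ on each side, the inequality $p'(v) \geq 0$ translates directly into
\[\#\{S \in \calB(\lambda)_\mu : r(\eta_1,S) = k\} \;\geq\; \#\{T \in \calB(\lambda)_\nu : r(\eta_1,T) = k+1\}\]
for every $k \in \tfrac12\bbZ$, using only the defining identity of the recharge $r(\eta_1,-)$.

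The final step is to assemble $\psi_\nu$ from arbitrary choices. For each $k$, pick any injection
\[\psi_{\nu,k}:\{T \in \calB(\lambda)_\nu : r(\eta_1,T) = k+1\} \hookrightarrow \{S \in \calB(\lambda)_\mu : r(\eta_1,S) = k\},\]
and set $\psi_\nu := \bigsqcup_k \psi_{\nu,k}$. Since the fibers of $r(\eta_1,-)$ partition $\calB(\lambda)_\mu$ and $\calB(\lambda)_\nu$, this is a well-defined injection $\calB(\lambda)_\nu \to \calB(\lambda)_\mu$ satisfying \eqref{findf}. The only substantive input is the non-negativity of $p'(v)$, which is already built into \Cref{HLetas} via the classification of simple $B_{\alpha^\vee}$-equivariant perverse sheaves on $\bbP^1$; so there is no real obstacle, and in fact $\psi_\nu$ is highly non-unique — the real combinatorial content, namely a \emph{canonical} choice of swapping function, will have to be imposed separately (and this is what occupies the later, type-$A$-specific sections).
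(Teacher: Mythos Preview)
Your proof is correct and follows essentially the same approach as the paper: the paper's one-line argument cites \Cref{crossing} to obtain $p,p'\in\bbZ_{\geq 0}[q^{\frac12},q^{-\frac12}]$ with $\sum_{T\in\calB(\lambda)_\mu}q^{r(\eta_1,T)}=q^{-1}p(q)+p'(q)$ and $\sum_{T\in\calB(\lambda)_\nu}q^{r(\eta_1,T)}=p(q)$, which is exactly the positivity you extract by unpacking \Cref{HLetas} and \Cref{HLeta12}. Your level-by-level construction of $\psi_\nu$ simply makes explicit what the paper leaves implicit.
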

\begin{proof}
	This follows from \Cref{crossing}. In fact, there exist $p(q),p'(q)\in \bbZ_{\geq 0}[q^{\frac12},q^{-\frac12}]$ such that 
	\[\sum_{T \in \calB(\lambda)_\mu}q^{r(\eta_1,T)}=q^{-1}p(q)+p'(q)\aand\sum_{T \in \calB(\lambda)_\nu}q^{r(\eta_1,T)}=p(q).\qedhere\]
\end{proof}

\begin{definition}
	We call a collection of maps $\psi=\{\psi_\nu\}_{s_{\alpha^\vee}(\nu)<\nu\leq\lambda}$ such that each $\psi_\nu$ satisfies \cref{findf}, a \emph{swapping function} between $\eta_1$ and $\eta_2$.
\end{definition}

Given a recharge $r(\eta_1,-)$, every swapping function $\psi$ determines a recharge for $\eta_2$.
In fact, $r(\eta_2,-)$ can be obtained from $r(\eta_1,-)$ as follows. If $\mu\leq \lambda$ is such that $s_{\alpha^\vee}(\mu)\not\leq \lambda$ or $s_{\alpha^\vee}(\mu)=\mu$ then $r(\eta_2,-)=r(\eta_1,-)$ on $\calB(\lambda)_\mu$. 

If $\mu\leq \lambda$ is such that $\mu< s_{\alpha^\vee}(\mu)\leq \lambda$ and $\psi_{s_{\alpha^\vee}(\mu)}:\calB(\lambda)_{s_{\alpha^\vee}(\mu)}\ra \calB(\lambda)_\mu$ then $r(\eta_2,-)$ is obtained by swapping the values of $r(\eta_1,-)$ on $T$ and $\psi_{s_{\alpha^\vee}(\mu)}(T)$ for any $T\in \calB(\lambda)_{s_{\alpha^\vee}(\mu)}$ while leaving unchanged the values on $T$, for $T\in \calB(\lambda)_\mu \setminus\Ima(\psi_{s_\alpha^\vee(\mu)})$.

In formulas, we have
\begin{equation}\label{reta2}
r(\eta_2,T)=\begin{cases}r(\eta_1,T)-1& \text{if }s_{\alpha^\vee}(\wt(T))<\wt(T)\\
r(\eta_1,T)+1& \text{if }\wt(T)<s_{\alpha^\vee}(\wt(T))\leq \lambda\text{ and }T\in \Ima(\psi)\\
r(\eta_1,T)& \text{if }\wt(T)\leq s_{\alpha^\vee}(\wt(T))\text{ and }T\not\in \Ima(\psi).
\end{cases}
\end{equation}

\begin{thm}
	Let $\eta_1$ and $\eta_2$ be as above.
	If $r(\eta_1,-)$ is a recharge for $\eta_1$ on $\calB(\lambda)$ and $\psi$ is a swapping function then $r(\eta_2,-)$ defined by \cref{reta2} is a recharge for $\eta_2$.
\end{thm}
\begin{proof}
	Assume that $s_{\alpha^\vee}(\wt(T))\not \leq \lambda$ or that $s_{\alpha^\vee}(\wt(T))=\wt(T)$. Then we are in the third case of \eqref{reta2}. In this case $\HL^1_{\wt(T)}=\HL^2_{\wt(T)}$ by \Cref{HLadjacentgood}, therefore, $r(\eta_1,-)$ is also a recharge for $\eta_2$ on $\calB(\lambda)_{\wt(T)}$.
	
	If $s_{\alpha^\vee}(\wt(T)) \leq \lambda$, then the claim follows from \Cref{crossing} and \eqref{findf}.
\end{proof}

There are only finitely many roots in $\affPhi^\vee(\lambda)$, hence only finitely many $\lambda$-walls separating the MV chamber from the KL chamber. 
We can always find a sequence of regular cocharacters $\eta_0,\eta_1,\ldots,\eta_M$ such that $\eta_0$ is in the MV chamber, $\eta_M$ is in the KL chamber and such that $\eta_i$ and $\eta_{i+1}$ are in adjacent $\lambda$-chambers for any $i$.

There exists a unique recharge for $\eta_0$ in the MV chamber. By \Cref{KLinMV}, for any $T$ we have 
\begin{equation}\label{rMV}
r(\eta_0,T)=-\langle \wt(T),\rho^\vee\rangle.
\end{equation}

So, if we have for any $i$ a swapping function between $\eta_i$ and $\eta_{i+1}$, starting with $r(\eta_0,-)$ we can inductively construct a recharge on the KL chamber, hence a charge. 
We say that a recharge obtained through this process is a recharge \emph{obtained via swapping operations}.


\subsection{Swapping functions in  rank one}\label{rankoneSec}
We now examine in detail the case of a root system $\Phi$ of type $A_1$.
Although this is a simple setting because all weight multiplicities for irreducible representations are one, it is useful to illustrate the
procedure developed in the previous sections.
Moreover, it also serves as the base step for the inductive process used in \cite{ChargeA} to construct the charge statistic in type $A_n$ and in \cite{PTAtoms} for type $C_2$

We have $\Phi=\{ \alpha,-\alpha\}$ and $\affPhi^\vee_+=\{m\delta +\alpha^\vee \mid m\geq 0 \} \cup \{ m\delta -\alpha^\vee \mid m \geq 1\}$.
The hyperplanes that divide the MV region from the KL region are all of the form $H_{m\delta-\alpha^\vee}$, with $m\geq 1$.

Let $\lambda=N\varpi$ with $N\geq 0$. Then $\calB(\lambda)$ is isomorphic as a graph to an oriented string with $N+1$ elements and $\affPhi^\vee(\lambda)=\{ k\delta -\alpha^\vee \mid 1\leq k \leq N-1\} \cup \{ k\delta +\alpha^\vee \mid 0\leq k \leq N-1\}$.

For any $\mu\leq \lambda$,  the crystal $\calB(\lambda)$ contains exactly one element of weight $\mu$. Hence, there exists a unique swapping function $\psi$ and a unique recharge function for any cocharacter $\eta$.

Let $\eta_0$ be in the MV chamber. If $T\in \calB(N\varpi)_{A\varpi}$, with $A\in \bbZ$ then $r(\eta_0,T)=-\frac12 A$.
We choose a sequence of regular cocharacters $\eta_0,\ldots,\eta_{N-1}$ such that the only wall separating $\eta_i$ from $\eta_{i+1}$ is $H_{(N-1-i)\delta-\alpha^\vee}$.
We have 
\[s_{i\delta-\alpha^\vee}(A\varpi)=(-A-2i)\varpi\]
and $A\varpi<s_{i\delta-\alpha^\vee}(A\varpi)$ if and only if $A>-i$.
In this case the swapping function $\psi$ between $\eta_{N-i-1}$ and $\eta_{N-i}$ is given by
\[ \psi_{(-A-2i)\varpi}:\calB(N\varpi)_{(-A-2i)\varpi}\ra 
\calB(N\varpi)_{A\varpi},\qquad
\psi(T)=
e_1^{A+i}(T)\]
for any $A\leq N$ such that $A+i>0$ and $A+2i\leq N$.

We can spell out \cref{reta2} in our setting.
For $T\in \calB(N\varpi)_{A\varpi}$ we have
\begin{equation}\label{rforA1}
	r(\eta_{N-i},T)=\begin{cases}r(\eta_{N-i-1},T)-1& \text{if }A+i<0\text{ and }A+2i\leq N\\
		r(\eta_{N-i-1},T)+1& \text{if }A+i>0\text{ and }A+2i\leq N\\
		r(\eta_{N-i-1},T)& \text{if }A+2i>N\text{ or if }A+i=0.
	\end{cases}
\end{equation}

\begin{example}
	Consider the crystal $\calB(4\varpi)$. 	
	\begin{center}
		\begin{tikzpicture}
			\node at (-7,0) {};
			\tikzstyle{every node}=[draw,circle,fill=black,minimum size=5pt, inner sep=0pt]
			\draw (-4,0) node (-4) {};
			\draw (-2,0) node (-2) {};
			\draw (0,0) node (0) {};
			\draw (2,0) node (2) {};
			\draw (4,0) node (4) {};
			\tikzstyle{every node}=[]
			\path (-4) ++(0,0.4) node {$-4\varpi$};
			\path (-2) ++(0,0.4) node {$-2\varpi$};
			\path (0) ++(0,0.4) node {$0$};
			\path (2) ++(0,0.4) node {$2\varpi$};
			\path (4) ++(0,0.4) node {$4\varpi$};
			\path[->] 
			(4) edge node[above] {$f_1$} (2)
			(2) edge node[above] {$f_1$} (0)
			(0) edge node[above] {$f_1$} (-2)
			(-2) edge node[above] {$f_1$} (-4);

		\end{tikzpicture}
	\end{center}
	
	We start with the recharge $r(\eta_0,-)$ and after performing the ``swapping operation'' as dictated by \eqref{rforA1}, we obtain the recharge in the KL chamber.
	\begin{center}
		\begin{tikzpicture}
			\node at (-7,0) {};
			\tikzstyle{every node}=[draw,circle,fill=black,minimum size=5pt, inner sep=0pt]
			\draw (-4,0) node (-4) {};
			\draw (-2,0) node (-2) {};
			\draw (0,0) node (0) {};
			\draw (2,0) node (2) {};
			\draw (4,0) node (4) {};
			\tikzstyle{every node}=[]
			\path (-4) ++(0,0.4) node {$2$};
			\path (-2) ++(0,0.4) node {$1$};
			\path (0) ++(0,0.4) node {$0$};
			\path (2) ++(0,0.4) node {$-1$};
			\path (4) ++(0,0.4) node {$-2$};
			\node at (-6,0) {$r(\eta_0,-)$:};
			\begin{scope}[yshift=-1.75cm]
				\tikzstyle{every node}=[draw,circle,fill=black,minimum size=5pt, inner sep=0pt]
				\draw (-4,0) node (-4) {};
				\draw (-2,0) node (-2) {};
				\draw (0,0) node (0) {};
				\draw (2,0) node (2) {};
				\draw (4,0) node (4) {};
				\tikzstyle{every node}=[]
				\path (-4) ++(0,0.4) node {$1$};
				\path (-2) ++(0,0.4) node {$2$};
				\path (0) ++(0,0.4) node {$0$};
				\path (2) ++(0,0.4) node {$-1$};
				\path (4) ++(0,0.4) node {$-2$};
				\node at (-6,0) {$r(\eta_1,-)$:};
				\path[->,red,very thick] (-2) edge node[above] {\scriptsize$3\delta-\alpha^\vee$} (-4);
			\end{scope}
			\begin{scope}[yshift=-3.5cm]
				\tikzstyle{every node}=[draw,circle,fill=black,minimum size=5pt, inner sep=0pt]
				\draw (-4,0) node (-4) {};
				\draw (-2,0) node (-2) {};
				\draw (0,0) node (0) {};
				\draw (2,0) node (2) {};
				\draw (4,0) node (4) {};
				\tikzstyle{every node}=[]
				\path (-4) ++(0,0.4) node {$0$};
				\path (-2) ++(0,0.4) node {$2$};
				\path (0) ++(0,0.4) node {$1$};
				\path (2) ++(0,0.4) node {$-1$};
				\path (4) ++(0,0.4) node {$-2$};
				\node at (-6,0) {$r(\eta_1,-)$:};
				\path[->,red,very thick] (0) edge[out=147,in=33] node[above] {\scriptsize$2\delta-\alpha^\vee$} (-4);
			\end{scope}
			\begin{scope}[yshift=-5.25cm]
				\tikzstyle{every node}=[draw,circle,fill=black,minimum size=5pt, inner sep=0pt]
				\draw (-4,0) node (-4) {};
				\draw (-2,0) node (-2) {};
				\draw (0,0) node (0) {};
				\draw (2,0) node (2) {};
				\draw (4,0) node (4) {};
				\tikzstyle{every node}=[]
				\path (-4) ++(0,0.4) node {$-1$};
				\path (-2) ++(0,0.4) node {$1$};
				\path (0) ++(0,0.4) node {$2$};
				\path (2) ++(0,0.4) node {$0$};
				\path (4) ++(0,0.4) node {$-2$};
				\node at (-6,0) {$r(\eta_3,-)$:};
				\path[->,red,very thick] (2) edge[out=150,in=30] node[above] {\scriptsize$\delta-\alpha^\vee$} (-4);
				\path[->,red,very thick] (0) edge node[above] {\scriptsize$\delta-\alpha^\vee$} (-2);
			\end{scope}
		\end{tikzpicture}
	\end{center}
	
	Therefore, the charge statistic on $\calB(4\varpi)$ is as follows.
	\begin{center}
		\begin{tikzpicture}	
			\tikzstyle{every node}=[draw,circle,fill=black,minimum size=5pt, inner sep=0pt]
			\draw (-4,0) node (-4) {};
			\draw (-2,0) node (-2) {};
			\draw (0,0) node (0) {};
			\draw (2,0) node (2) {};
			\draw (4,0) node (4) {};
			\tikzstyle{every node}=[]
			\path (-4) ++(0,0.4) node {$\frac12$};
			\path (-2) ++(0,0.4) node {$\frac32$};
			\path (0) ++(0,0.4) node {$2$};
			\path (2) ++(0,0.4) node {$1$};
			\path (4) ++(0,0.4) node {$0$};
			\node at (-5.5,0) {$c$:};
			\node at (-7,0) {};
		\end{tikzpicture}
	\end{center}
\end{example}

Recall that $\ell(A\varpi)=A$ if $A\geq 0$ and $\ell(A\varpi)=-A-1$ if $A<0$. Thus, for an arbitrary $N\geq 0$,  we have $h_{A\varpi,N\varpi}(v)=v^{N-A}$ if $A\geq 0$ while $h_{A\varpi,N\varpi}(v)=v^{N+A+1}$ if $A<0$.
By \Cref{KLprop}, it follows that for $\eta_M$ in the KL region we have
\[ r(\eta_M,T)=\begin{cases}
	\frac12(N-2A) & \text{if }A\geq 0,\\
	\frac12(N+2A+2) & \text{if }A<0.
\end{cases}\]

We can  compute the total variation of the recharge.
\begin{lemma}\label{A1Cor}
	For any $T\in \calB(N\varpi)$ we have
	\[r(\eta_M,T)-r(\eta_0,T)=\phi_\alpha(T)-\ell(T).\]
\end{lemma}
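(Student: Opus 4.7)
The plan is a direct computation. Since $\calB(N\varpi)$ has at most one element of each weight, an element $T$ of weight $A\varpi$ (with $A\in\{N,N-2,\ldots,-N\}$) is determined by $A$, and all four quantities in the identity are explicit functions of $A$ and $N$. I would compute each term in turn and then check the equality by a two-line case distinction on the sign of $A$.

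First, read off $r(\eta_0,T)=-A/2$ directly from \eqref{rMV}. Next, compute $r(\eta_M,T)$: by \Cref{KLprop} it is obtained from the type $A_1$ Kazhdan--Lusztig polynomial $h_{A\varpi,N\varpi}(v)$, which equals $v^{N-A}$ if $A\geq 0$ and $v^{N+A+1}$ if $A<0$ (the Schubert varieties $\sch{N\varpi}$ are smooth in rank $1$). Unwinding the definition of recharge gives
\[
r(\eta_M,T)=\begin{cases}(N-2A)/2 & \text{if }A\geq 0,\\ (N+2A+2)/2 & \text{if }A<0.\end{cases}
\]
Then observe that $\phi_\alpha(T)=(N+A)/2$, since in the linear crystal $\calB(N\varpi)$ this is precisely the number of $f_1$-steps from $T$ down to the lowest weight vector. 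Finally, the length formula \eqref{totallength} specialised to rank $1$ gives $\ell(A\varpi)=A$ for $A\geq 0$ and $\ell(A\varpi)=-A-1$ for $A<0$.

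With these four formulas in hand, the claim reduces to the arithmetic identities
\[
\frac{N-2A}{2}+\frac{A}{2}=\frac{N-A}{2}=\frac{N+A}{2}-A\qquad(A\geq 0)
\]
and
\[
\frac{N+2A+2}{2}+\frac{A}{2}=\frac{N+3A+2}{2}=\frac{N+A}{2}-(-A-1)\qquad(A<0),
\]
both of which are immediate.

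I expect no real obstacle here: the statement is essentially a bookkeeping check against the explicit descriptions given in \Cref{rankoneSec}. If one preferred an inductive argument closer to the spirit of the paper, one could instead compute $r(\eta_M,T)-r(\eta_0,T)$ by summing $\pm 1$ contributions from \eqref{rforA1} as $i$ runs from $N-1$ down to $1$; counting the indices $i$ for which $A+i<0$ (decrements) and for which $0<-A<i\leq (N-A)/2$ (increments) gives the same totals $(N-A)/2$ and $(N+3A+2)/2$ respectively. The only mildly delicate point in that alternative is the boundary case $A=-N$ where the range of $i$ contributing increments becomes empty, but this is trivially verified and poses no genuine difficulty.
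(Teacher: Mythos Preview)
Your proposal is correct and follows essentially the same approach as the paper: both compute $r(\eta_0,T)$, $r(\eta_M,T)$, $\phi_\alpha(T)$, and $\ell(A\varpi)$ explicitly as functions of $A$ and $N$, then verify the identity by a case split on the sign of $A$. The paper's proof is terser (it only writes out the $A\geq 0$ case and says the other is similar), but the content is identical; your alternative inductive argument via \eqref{rforA1} is an extra remark not present in the paper.
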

\begin{proof}
	Observe that
	$\phi_\alpha(T)=\frac12(N+A)$.
	If $A\geq 0$ the claim reduces to $\frac12 (N-2A)+\frac12 A=\frac12 (N+A)-A$. The case $A<0$ is similar.
\end{proof}

\def\cprime{$'$}

\bibliography{/home/leo/Dropbox/Matherial/Bibliography/mybiblio}

\newcommand{\etalchar}[1]{$^{#1}$}
\def\cprime{$'$}
\begin{thebibliography}{BBD{\etalchar{+}}22}

\bibitem[AP83]{APConvexity}
M.~F. Atiyah and A.~N. Pressley.
\newblock Convexity and loop groups.
\newblock In {\em Arithmetic and geometry, {V}ol. {II}}, volume~36 of {\em
  Progr. Math.}, pages 33--63. Birkh\"{a}user Boston, Boston, MA, 1983.

\bibitem[BBD{\etalchar{+}}22]{BBD+Towards}
Charles Blundell, Lars Buesing, Alex Davies, Petar Veli{\v{c}}kovi{\'c}, and
  Geordie Williamson.
\newblock Towards combinatorial invariance for kazhdan-lusztig polynomials.
\newblock {\em Representation Theory of the American Mathematical Society},
  26(37):1145--1191, 2022.

\bibitem[BFG06]{BFGUhlenbeck}
Alexander Braverman, Michael Finkelberg, and Dennis Gaitsgory.
\newblock Uhlenbeck spaces via affine {L}ie algebras.
\newblock In {\em The unity of mathematics}, volume 244 of {\em Progr. Math.},
  pages 17--135. Birkh\"{a}user Boston, Boston, MA, 2006.

\bibitem[BG01]{BGCrystals}
Alexander Braverman and Dennis Gaitsgory.
\newblock Crystals via the affine {G}rassmannian.
\newblock {\em Duke Math. J.}, 107(3):561--575, 2001.

\bibitem[BL94]{BLEquivariant}
Joseph Bernstein and Valery Lunts.
\newblock {\em Equivariant sheaves and functors}, volume 1578 of {\em Lecture
  Notes in Mathematics}.
\newblock Springer-Verlag, Berlin, 1994.

\bibitem[BM01]{BMMoment}
Tom Braden and Robert MacPherson.
\newblock From moment graphs to intersection cohomology.
\newblock {\em Math. Ann.}, 321(3):533--551, 2001.

\bibitem[BR18]{BRNotes}
Pierre Baumann and Simon Riche.
\newblock Notes on the geometric {S}atake equivalence.
\newblock In {\em Relative aspects in representation theory, {L}anglands
  functoriality and automorphic forms}, volume 2221 of {\em Lecture Notes in
  Math.}, pages 1--134. Springer, Cham, 2018.

\bibitem[Bra03]{BraHyperbolic}
Tom Braden.
\newblock Hyperbolic localization of intersection cohomology.
\newblock {\em Transform. Groups}, 8(3):209--216, 2003.

\bibitem[Bri18]{BriLinearization}
Michel Brion.
\newblock Linearization of algebraic group actions.
\newblock In {\em Handbook of group actions. {V}ol. {IV}}, volume~41 of {\em
  Adv. Lect. Math. (ALM)}, pages 291--340. Int. Press, Somerville, MA, 2018.

\bibitem[DGT20]{DGTPositive}
Maciej Do{\l}\polhk{e}ga, Thomas Gerber, and Jacinta Torres.
\newblock A positive combinatorial formula for symplectic {K}ostka-{F}oulkes
  polynomials {I}: {R}ows.
\newblock {\em J. Algebra}, 560:1253--1296, 2020.

\bibitem[Kac90]{KacInfinite}
Victor~G. Kac.
\newblock {\em Infinite-dimensional {L}ie algebras}.
\newblock Cambridge University Press, Cambridge, third edition, 1990.

\bibitem[Kam07]{KamCrystal}
Joel Kamnitzer.
\newblock The crystal structure on the set of {M}irkovi\'{c}-{V}ilonen
  polytopes.
\newblock {\em Adv. Math.}, 215(1):66--93, 2007.

\bibitem[Kas91]{KasCrystal}
M.~Kashiwara.
\newblock On crystal bases of the {$Q$}-analogue of universal enveloping
  algebras.
\newblock {\em Duke Math. J.}, 63(2):465--516, 1991.

\bibitem[Kat82]{KatSpherical}
Shin-ichi Kato.
\newblock Spherical functions and a {$q$}-analogue of {K}ostant's weight
  multiplicity formula.
\newblock {\em Invent. Math.}, 66(3):461--468, 1982.

\bibitem[KL80]{KLSchubert}
David Kazhdan and George Lusztig.
\newblock Schubert varieties and {P}oincar\'e duality.
\newblock In {\em Geometry of the {L}aplace operator ({P}roc. {S}ympos. {P}ure
  {M}ath., {U}niv. {H}awaii, {H}onolulu, {H}awaii, 1979)}, Proc. Sympos. Pure
  Math., XXXVI, pages 185--203. Amer. Math. Soc., Providence, R.I., 1980.

\bibitem[Kno05]{KnoKazhdan}
Friedrich Knop.
\newblock On the {K}azhdan--{L}usztig basis of a spherical {H}ecke algebra.
\newblock {\em Represent. Theory}, 9:417--425, 2005.

\bibitem[Kum02]{KumKac}
Shrawan Kumar.
\newblock {\em Kac-{M}oody groups, their flag varieties and representation
  theory}, volume 204 of {\em Progress in Mathematics}.
\newblock Birkh\"auser Boston, MA, 2002.

\bibitem[Las91]{LasCyclic}
Alain Lascoux.
\newblock Cyclic permutations on words, tableaux and harmonic polynomials.
\newblock In {\em Proceedings of the {H}yderabad {C}onference on {A}lgebraic
  {G}roups ({H}yderabad, 1989)}, pages 323--347. Manoj Prakashan, Madras, 1991.

\bibitem[Lec05]{LecKostka}
C\'{e}dric Lecouvey.
\newblock Kostka-{F}oulkes polynomials cyclage graphs and charge statistic for
  the root system {$C_n$}.
\newblock {\em J. Algebraic Combin.}, 21(2):203--240, 2005.

\bibitem[Lec06]{LecCombinatorics}
C\'{e}dric Lecouvey.
\newblock Combinatorics of crystal graphs and {K}ostka-{F}oulkes polynomials
  for the root systems {$B_n,\ C_n$} and {$D_n$}.
\newblock {\em European J. Combin.}, 27(4):526--557, 2006.

\bibitem[LL20]{LLCombinatorics}
C\'{e}dric Lecouvey and Cristian Lenart.
\newblock Combinatorics of generalized exponents.
\newblock {\em Int. Math. Res. Not. IMRN}, (16):4942--4992, 2020.

\bibitem[LLT95]{LLTCrystal}
Alain Lascoux, Bernard Leclerc, and Jean-Yves Thibon.
\newblock Crystal graphs and {$q$}-analogues of weight multiplicities for the
  root system {$A_n$}.
\newblock {\em Lett. Math. Phys.}, 35(4):359--374, 1995.

\bibitem[LP23]{LPAffine}
Nicolas Libedinsky and Leonardo Patimo.
\newblock On the affine {H}ecke category for $sl_3$.
\newblock {\em Selecta Mathematica}, 29(4):64, 2023.

\bibitem[LPP22]{LPPPre}
Nicolas Libedinsky, Leonardo Patimo, and David Plaza.
\newblock Pre-canonical bases on affine {H}ecke algebras.
\newblock {\em Adv. Math.}, 399:Paper No. 108255, 2022.

\bibitem[LS78]{LSSur}
Alain Lascoux and Marcel-Paul Sch\"{u}tzenberger.
\newblock Sur une conjecture de {H}. {O}. {F}oulkes.
\newblock {\em C. R. Acad. Sci. Paris S\'{e}r. A-B}, 286(7):A323--A324, 1978.

\bibitem[LS81]{LSLe}
Alain Lascoux and Marcel-P. Sch\"{u}tzenberger.
\newblock Le mono\"{\i}de plaxique.
\newblock In {\em Noncommutative structures in algebra and geometric
  combinatorics ({N}aples, 1978)}, volume 109 of {\em Quad. ``Ricerca Sci.''},
  pages 129--156. CNR, Rome, 1981.

\bibitem[Lus83]{LusSingularities}
George Lusztig.
\newblock Singularities, character formulas, and a {$q$}-analog of weight
  multiplicities.
\newblock In {\em Analysis and topology on singular spaces, {II}, {III}
  ({L}uminy, 1981)}, volume 101 of {\em Ast\'erisque}, pages 208--229. Soc.
  Math. France, Paris, 1983.

\bibitem[Lus90]{LusCanonical}
G.~Lusztig.
\newblock Canonical bases arising from quantized enveloping algebras.
\newblock {\em J. Amer. Math. Soc.}, 3(2):447--498, 1990.

\bibitem[MV07]{MVGeometric}
I.~Mirkovi{\'c} and K.~Vilonen.
\newblock Geometric {L}anglands duality and representations of algebraic groups
  over commutative rings.
\newblock {\em Ann. of Math. (2)}, 166(1):95--143, 2007.

\bibitem[NY97]{NYKostka}
Atsushi Nakayashiki and Yasuhiko Yamada.
\newblock Kostka polynomials and energy functions in solvable lattice models.
\newblock {\em Selecta Math. (N.S.)}, 3(4):547--599, 1997.

\bibitem[Pat21]{PatCombinatorial}
Leonardo Patimo.
\newblock A combinatorial formula for the coefficient of q in
  {K}azhdan--{L}usztig polynomials.
\newblock {\em Int. Math. Res. Not. IMRN}, (5):3203--3223, 2021.

\bibitem[Pat22]{PatBases}
Leonardo Patimo.
\newblock Bases of the intersection cohomology of {G}rassmannian {S}chubert
  varieties.
\newblock {\em J. Algebra}, 589:345--400, 2022.

\bibitem[Pat24]{ChargeA}
Leonardo Patimo.
\newblock The charge statistic in type {A} via the affine {G}rassmannian, 2024.

\bibitem[PT23]{PTAtoms}
Leonardo Patimo and Jacinta Torres.
\newblock Atoms and charge in type ${C}_2$, 2023.

\bibitem[Shi01]{ShiMulti}
Mark Shimozono.
\newblock Multi-atoms and monotonicity of generalized {K}ostka polynomials.
\newblock {\em European J. Combin.}, 22(3):395--414, 2001.

\bibitem[Sum74]{SumEquivariant}
Hideyasu Sumihiro.
\newblock Equivariant completion.
\newblock {\em J. Math. Kyoto Univ.}, 14:1--28, 1974.

\end{thebibliography}
\bibliographystyle{alpha}
\Address

\end{document}